\documentclass[preprint, 1p,number,12pt]{elsarticle}

\usepackage{amsfonts, amssymb,tikz,pictex, latexsym, hyperref, amsthm,color,mathrsfs,graphicx, amsmath,caption,tabularx,verbatim,comment,pdfpages}

\textwidth = 16.00cm
\textheight = 22.00cm
\oddsidemargin = 0.12in
\evensidemargin = 0.12in
\setlength{\parindent}{0pt}
\setlength{\parskip}{5pt plus 2pt minus 1pt}

\numberwithin{equation}{section}
\makeatletter
\renewcommand{\subsection}{\@startsection
	{subsection}{2}{0mm}{\baselineskip}{-0.25cm}
	{\normalfont\normalsize\bf}}
\makeatother


\newtheorem{thm}{Theorem}[section]
\newtheorem{theorem}[thm]{Theorem}

\newtheorem{lemma}[thm]{Lemma}

\newtheorem{corollary}[thm]{Corollary}

\newtheorem{proposition}[thm]{Proposition}

\numberwithin{equation}{section}
\theoremstyle{remark}
\newtheorem{remark}[thm]{Remark}

\theoremstyle{definition}


\newcommand{\cH}{{\mathcal H}}

\newcommand{\cC}{{\mathcal C}}

\newcommand{\cD}{\mathcal{D}}

\newcommand{\fq}{{\mathbb{F}_q}}
\newcommand{\fqs}{{\mathbb{F}_{q^2}}}

\newcommand{\Aut}{\mathrm{Aut}}

\newcommand{\bfq}{{\mathbb F_q}}

\makeatletter
\@ifpackageloaded{hyperref}%
{\newcommand{\mylabel}[2]
	{\protected@write\@auxout{}{\string\newlabel{#1}{{#2}{\thepage}%
				{\@currentlabelname}{\@currentHref}{}}}}}%
{\newcommand{\mylabel}[2]
	{\protected@write\@auxout{}{\string\newlabel{#1}{{#2}{\thepage}}}}}
\makeatother

\definecolor{Amaranto}{rgb}{0.9, 0.17, 0.31}
\definecolor{Borgogna}{rgb}{0.5, 0.0, 0.13}

\sloppy

\begin{document}
	\title{On permutation quadrinomials from Niho exponents in characteristic two}
	
	\author[1]{Vincenzo Pallozzi Lavorante\corref{cor1}%
		}
	\ead{vincenzop@usf.edu}
	\cortext[cor1]{Corresponding author}
	
	\affiliation[1]{organization={Department of Mathematics and Statistics}, addressline={University of South Florida}, postcode={FL 33620}, city={Tampa}, country={USA}}

	\begin{abstract}
		Recently Zheng et al. \cite{zheng2022more} characterized the coefficients of $ f(x) =
		x + a_1x^{s_1(2^m-1)+1} + a_2x^{s_2(2^m-1)+1} + a_3x^{s_3(2^m-1)+1} $ over $\mathbb{ F}_{2^{2m}}$ that lead $ f(x) $ to be a permutation of $\mathbb{ F}_{2^{2m}}$ for $ (s_1, s_2, s_3) = (\frac{1}{4},1, \frac{3}{4})$. They left open the question whether those conditions were also necessary. In this paper, we give a positive answer to that question, solving their conjecture.
	\end{abstract}
	
	\maketitle

	\begin{small}
		
		{\bf Keywords:} Permutation quadrinomials, Hasse-Weil, Niho exponents.
		
		{\bf 2000 MSC:} {11T06, 11T55, 14H05}.
		
	\end{small}

	\section{Introduction}
	Let $q=p^m$ be a prime power. Let $\bfq$ denote the finite fields of $q$ elements. A polynomial $f \in \bfq[x]$ is called a permutation polynomial (PP) of $\bfq$ if the induced mapping $x \mapsto f(x)$ is a bijection of $\bfq$. Several authors in recent history have focused on permutation polynomials and their applications.
	For example, PPs have been widely used in coding theory and cryptography, and we refer the reader to \cite{Hou_Survey,Hou-CM-2015} for a survey on the latest advances.
	Recently, PPs taking simple forms and few terms have attracted much interest and have been deeply investigated.
	In \cite{Zieve_mq+1} the authors provided a powerful method to construct PPs using the set of $q+1$-th roots of unity and it was generalized in \cite{Wang-LNCS-2007,Zieve-arXiv1310.0776}
	Along this view, several new families of permutation polynomials have been constructed and we refer the reader to \cite{Hou-arXiv:1609.03662, Lappano-pc,Lappano-thesis, hou2023new,hou2022general} for more details.
	
	Another way to look at the PPs is based on algebraic curves over finite fields. In \cite{Hou-FFA-2018}, it was shown how to use the theory of algebraic curves to determine whether a polynomial is a permutation polynomial or not. 
	
	Permutation trinomials with Niho exponents of the form $ f(x) =
	x + a_1x^{s_1(2^m-1)+1} + a_2x^{s_2(2^m-1)+1}  \in \fqs[x]$, have attracted much interest in recent years. See for example \cite{Bartoli_HWPP, Hou-FFA-2015b, hirschfeld1979projective}. The parameters $ s_1, s_2 $ should be read modulo $ q+1 $. 
	Given $ (s_1, s_2), $ finding conditions on $ a_1, a_2 $ that are sufficient and necessary for $ f $ to be
	a permutation polynomial of $\fqs$ is a hard question and some progress have been done in that direction. See \cite{hou2014determination, Hou-FFA-2015b, bartoli2018conjecture}.
	
	However, the situation for permutation quadrinomials is different. Let $  f(x) = x + a_1x^{s_1(2^m-1)}+ a_2x^{s_2(2^m-1)}+ a_3x^{s_3(2^m-1)} $. Recently, Tu et al. investigated the case of $ (s_1, s_2, s_3) = (-1, 1, 2) $ under some restrictive conditions \cite{tu2021binomial}.
	In \cite{zheng2022more} the authors provided more classes of permutation quadrinomials from Niho exponents in characteristic two for $(s_1,s_2,s_3)=(\frac{-1}{2^k-1},1,\frac{2^k}{2^k-1})$, $(s_1,s_2,s_3)=(\frac{1}{2^k+1},1,\frac{2^k}{2^k+1})$, where $m$ and $k$ are positive integers and $ (s_1, s_2, s_3) = (\frac{1}{4},1, \frac{3}{4})$.
	Their work focuses on finding sufficient conditions for the polynomials to be permutation polynomials and it is based on arithmetic over finite fields. What is often harder is to show whether those conditions are also necessary or not.
	In fact, they also suggested that the conditions given were necessary for $m$ big enough, but they have not found efficient techniques to show those facts.
	
	Very recently the problem of characterizing  permutation quadrinomials was also address by Ding and Zieve in \cite{ding2022determination}, where the authors determined a very large class of permutation quadrinomials by using novel geometric techniques (even when the Weil bounds do not provide useful information). In particular they were able to solve two out of the three conjectures presented in \cite[Th 1.1 and 1.3]{zheng2022more}.
	
	In this paper, we aim to answer the conjecture left open, that is investigating whether the conditions in in \cite[Theorem 1.4]{zheng2022more} are also necessary. We will use the Hasse-Weil type theorems to prove necessary conditions for a polynomial to be a permutation polynomial.
	In particular we will give a complete answer to the question, see Theorem \ref{th:main2}.

	\section{Setting and known results}
	Let $ q=2^m $ be a prime power and $\fqs $ be the finite field of $ q^2 $ elements.
	Let $ a_1, a_2, a_3 \in \fqs $ and denote $ \theta_1=1+a_1^{q+1}+a_2^{q+1}+a_3^{q+1} $, $ \theta_2=a_1^q+a_3 a_2^q $, $ \theta_3=a_3+a_2 a_1^q $, $\theta_4 = a_1^{q+1}+a_3^{q+1}$ and $ \theta_4'=\theta_1+\theta_4=1+a_2^{q+1} $. Note that
	\[\theta_2^{q+1}+\theta_3^{q+1}=\theta_4 \theta_4 '.\]

	We now summarize the previous results we need in this chapter. See \cite{zheng2022more} for more details.
	
	\begin{theorem}[{\cite[Theorem 1.4]{zheng2022more}}]\label{th:1.4}
		Let $n=2m$ be a positive integer. Let $(s_1,s_2,s_3)=(\frac{1}{4},1,\frac{3}{4})$ and $a_1,a_2,a_3 \in \fqs$. Then $f(\mathtt{X})=\mathtt{X}+a_1 \mathtt{X}^{s_1(q-1)+1}+a_2\mathtt{X}^{s_2(q-1)+1}+a_3\mathtt{X}^{s_3(q-1)+1}$ is a PP of $\fqs$ if either
		\begin{equation}\label{eq:1}
			\theta_4 \ne 0, \quad 	\theta_2=0 \quad \mbox{ and } \quad a_3 \in \mu_{q+1},\quad  a_3 \notin \{x^3 | x \in \mu_{q+1}\}	
		\end{equation}
		or 
		\begin{equation}\label{eq:2}
			\theta_1 \ne 0,	\theta_2 \ne 0, \theta_4=0, \theta_3=\theta_2^{2q-1}  \mbox{ and }   x^3+x+\frac{\theta_1^2}{\theta_2^{q+1}}=0 \mbox{ has no solutions over } \fq.
		\end{equation}
	\end{theorem}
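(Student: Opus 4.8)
The plan is to reduce the permutation property of $f$ on $\fqs$ to a bijectivity statement on the group $\mu_{q+1}$ of $(q+1)$-th roots of unity, and then to read the hypotheses \eqref{eq:1} and \eqref{eq:2} off an explicit low-degree equation. First I would factor $f(\tX)=\tX\,h(\tX^{q-1})$ with $h(y)=1+a_1y^{s_1}+a_2y^{s_2}+a_3y^{s_3}$, the exponents $s_i$ being read in $\mathbb{Z}/(q+1)$. By the well-known criterion for polynomials of this shape (see \cite{Zieve_mq+1}), $f$ permutes $\fqs$ if and only if $h$ has no zero on $\mu_{q+1}$ and $g(y):=y\,h(y)^{q-1}$ permutes $\mu_{q+1}$.

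Since $q+1$ is odd we have $\gcd(4,q+1)=1$, so $y\mapsto y^4$ is a bijection of $\mu_{q+1}$; setting $y=w^4$ and using $w^q=w^{-1}$ on $\mu_{q+1}$ turns $g$ into the degree-four rational map
\[
g(w)=\frac{w^4+a_1^qw^3+a_3^qw+a_2^q}{a_2w^4+a_3w^3+a_1w+1}.
\]
A direct computation gives $g(w)^{q+1}=h(w^4)^{q^2-1}=1$, so $g(\mu_{q+1})\subseteq\mu_{q+1}$ and it remains to prove that $g$ is injective on $\mu_{q+1}$ under \eqref{eq:1} or \eqref{eq:2}.

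To prove injectivity I would consider $g(u)=g(v)$ for $u,v\in\mu_{q+1}$. Writing $g=N/D$ with $N,D$ the two quartics above, clearing denominators yields $N(u)D(v)+N(v)D(u)=0$ in characteristic two. This polynomial is symmetric and vanishes on the diagonal, hence is divisible by $u+v$; after removing that factor one is left with a symmetric cofactor $Q(u,v)$ whose coefficients are exactly the quantities $\theta_1,\dots,\theta_4$, and the task becomes to show that $Q$ has no zero with $u\ne v$ in $\mu_{q+1}$. Here the two regimes diverge: under \eqref{eq:1} the condition $\theta_2=0$ collapses $Q$ so that an off-diagonal collision would force $a_3$ to be a cube in $\mu_{q+1}$, which is excluded by $a_3\notin\{x^3:x\in\mu_{q+1}\}$; under \eqref{eq:2} the conditions $\theta_4=0$ and $\theta_3=\theta_2^{2q-1}$, together with the identity $\theta_2^{q+1}+\theta_3^{q+1}=\theta_4\theta_4'$ recorded above, let one normalize $Q$ by $\theta_2$ and, via the parametrization $\mu_{q+1}\setminus\{1\}\leftrightarrow\fq$, reduce the collision equation to $x^3+x+\theta_1^2/\theta_2^{q+1}=0$ over $\fq$, which by hypothesis has no $\fq$-root.

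The main obstacle is this last step: performing the factorization $N(u)D(v)+N(v)D(u)=(u+v)Q(u,v)$ explicitly, matching the coefficients of $Q$ with $\theta_1,\theta_2,\theta_3,\theta_4$, and then reducing the symmetric relation $Q(u,v)=0$ to a single univariate cubic over $\fq$. The reduction requires a careful choice of parametrization of $\mu_{q+1}$ (for instance $w=(t+\alpha)/(t+\alpha^q)$ with $\alpha\in\fqs\setminus\fq$ and $t\in\fq\cup\{\infty\}$), under which conjugation $w\mapsto w^q$ becomes an $\fq$-rational involution and the cofactor descends to $\fq$; tracking how the $\theta_i$ transform under this descent is the delicate bookkeeping. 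A secondary point, to be settled by the same normalizations in each regime, is the separate verification that $h$ has no zero on $\mu_{q+1}$. Once the cubic is isolated, the non-cube and no-root conditions are precisely the statements that no off-diagonal collision survives, which completes the proof of sufficiency.
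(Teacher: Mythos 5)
First, note that this paper contains no proof of Theorem \ref{th:1.4} at all: it is imported verbatim from Zheng et al.\ \cite{zheng2022more}, who prove sufficiency by direct finite-field arithmetic, while the paper's own contribution (Theorem \ref{th:main2}) is the converse. Your proposal is therefore a genuinely different route from the cited source --- in essence you run this paper's necessity machinery backwards, and that route is viable. The reduction to the rational map $p$, the symmetric cofactor (the curve $\cC$ of \eqref{eq:C}), and the two factorization regimes you anticipate are exactly what Sections \ref{sec:4} and \ref{sec:5} establish: when $\theta_2=0$ and $a_3\in\mu_{q+1}$, the sextic $\cC$ splits into the six lines $X=\alpha_i^{-1}$, $Y=\alpha_i^{-1}$ with $\alpha_i^3=a_3$ (Proposition \ref{pr:Cfact}), so an off-diagonal point of $\cC\cap\mu_{q+1}^2$ exists precisely when some $\alpha_i\in\mu_{q+1}$, i.e.\ when $a_3$ is a cube in $\mu_{q+1}$; when $\theta_2\ne0$, $\theta_4=0$, $\theta_1\ne0$, $\theta_3=\theta_2^{2q-1}$, $\cC$ splits into the three conics $z_i(X+Y)+\theta_2^qXY+\theta_2=0$ indexed by the roots $z_i$ of $\theta_1+z+z^3/\theta_2^{q+1}=0$. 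Where your sketch is vaguest is the finish of regime \eqref{eq:2}: the clean mechanism is not descent through the parametrization $\mu_{q+1}\setminus\{1\}\leftrightarrow\fq$ but a Frobenius argument on each conic --- if $X,Y\in\mu_{q+1}$ satisfy the conic relation, raising it to the $q$-th power and multiplying by $XY$ yields $z_i^q(X+Y)+\theta_2^qXY+\theta_2=0$, hence $(z_i+z_i^q)(X+Y)=0$, so $X\ne Y$ forces $z_i\in\fq$, i.e.\ an $\fq$-root of the cubic (equivalently, of \eqref{eq:trin}). Also, your ``secondary point'' about $h$ vanishing on $\mu_{q+1}$ is automatic: such a zero $w_0$ makes the whole line $X=w_0$ a component of $\cC$, so it is subsumed by the emptiness of $\cC\cap\mu_{q+1}^2$ off the diagonal. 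With these two adjustments your plan gives a complete, purely geometric proof of sufficiency; notably it needs no Hasse--Weil bound and no lower bound on $q$, since only explicit factorizations enter, whereas Hasse--Weil (Proposition \ref{pr:A-P}) is essential to the paper's necessity direction.
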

	
	The aim of this paper is to answer the question left open by the authors in \cite[Theorem 1.4]{zheng2022more} and prove that conditions \eqref{eq:1} and \eqref{eq:2} are also necessary.
	
	The main result is stated in the following theorem.
	
	\begin{theorem}\label{th:main2}
		Let $m \geq 9$ be an integer and $q=2^m$. With the notation above, if the polynomial
		\begin{equation}\label{eq:f(x)}
			f(x)=x+a_1x^{s_1(q-1)+1}+a_2x^{s_2(q-1)+1}+a_3x^{s_3(q-1)+1}
		\end{equation} is a PP of $\fqs$ then 
		\begin{itemize}
			\item if $\theta_2=0$ then $\theta_4 \ne 0 $, $ a_3 \in \mu_{q+1} $, $a_3 \notin \{x^3 | x \in \mu_{q+1}\}$;
			\item if $\theta_2 \ne0$, then $\theta_4=0$, $\theta_1 \ne 0$,  $\theta_3=\theta_2^{2q-1}$  and  \begin{equation}\label{eq:trin}
				x^3+x+\frac{\theta_1^2}{\theta_2^{q+1}}=0 
			\end{equation} has no solutions in $ \fq $.
		\end{itemize}
	\end{theorem}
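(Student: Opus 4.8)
The plan is to reduce the permutation property of $f$ on $\fqs$ to a permutation property of an explicit degree-four rational map on the unit circle $\mu_{q+1}$, and then to detect non-injectivity through the $\fq$-rational points of an associated plane curve, whose count is controlled by the Hasse--Weil bound. I argue by contraposition: assuming that the relevant conclusion of Theorem~\ref{th:main2} fails, I will produce a genuine collision $f(x)=f(y)$ with $x\neq y$.

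First I would factor $f(x)=x\,g(x^{q-1})$ with $g(u)=1+a_1u^{s_1}+a_2u^{s_2}+a_3u^{s_3}$, the exponents $s_i$ being read modulo $q+1$. By the standard Niho reduction, $f$ permutes $\fqs$ if and only if $g$ has no zero on $\mu_{q+1}$ and the map $u\mapsto u\,g(u)^{q-1}$ permutes $\mu_{q+1}$. Since $\gcd(4,q+1)=1$, I substitute $z=u^{1/4}\in\mu_{q+1}$, so that $u^{s_1}=z$, $u^{s_3}=z^3$, $u^{s_2}=z^4$ and $g$ becomes the quartic $g(z)=a_2z^4+a_3z^3+a_1z+1$. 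Using $z^q=z^{-1}$ and $a_i^q=\overline{a_i}$ on $\mu_{q+1}$, a direct computation gives $u\,g(u)^{q-1}=\overline{g}(z)/g(z)$, where $\overline{g}(z)=z^4+\overline{a_1}z^3+\overline{a_3}z+\overline{a_2}$ is the conjugate-reciprocal quartic. Hence $f$ is a PP exactly when $g$ is zero-free on $\mu_{q+1}$ and $h(z):=\overline{g}(z)/g(z)$ is injective on $\mu_{q+1}$. The quantities $\theta_1,\dots,\theta_4$ are precisely the symmetric expressions governing the resultant of $g$ and $\overline g$ and their possible common factors, which is why they control the degeneracies.

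Next, proving $f$ not to be a PP amounts to exhibiting $z_1\neq z_2$ in $\mu_{q+1}$ with $h(z_1)=h(z_2)$. Clearing denominators, such a coincidence is an $\fq$-point, off the diagonal, of the curve
\[
\cC:\qquad \frac{\overline{g}(X)\,g(Y)-\overline{g}(Y)\,g(X)}{X-Y}=0,
\]
subject to $X,Y\in\mu_{q+1}$. The leading terms $a_2X^4Y^4$ cancel in the antisymmetric numerator, so $\cC$ has degree at most $6$. Parametrizing $\mu_{q+1}$ by $\mathbb{P}^1(\fq)$ through a Cayley transform $t\mapsto (t-\gamma)/(t-\gamma^q)$ with $\gamma\in\fqs\setminus\fq$ turns $\cC$ into an affine curve $\widetilde{\cC}$ over $\fq$ of bounded degree, whose relevant $\fq$-rational points yield the sought pair. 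As soon as $\widetilde{\cC}$ has an absolutely irreducible component defined over $\fq$, the Hasse--Weil bound gives at least $q+1-(d-1)(d-2)\sqrt q-c$ rational points with $d\le 6$; for $m\ge 9$ (so $q\ge 512>400$) this exceeds the number of points that must be discarded (the diagonal, the finitely many points at infinity or off $\mu_{q+1}$, and the singular locus), which is exactly where the threshold $m\ge 9$ originates.

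It therefore remains to run the dichotomy of the statement and, in each branch, to establish the absolute irreducibility of a suitable component of $\widetilde{\cC}$; this is the \emph{main obstacle}. I would split according to $\theta_2=0$ and $\theta_2\neq 0$. The configurations in which $h$ genuinely permutes $\mu_{q+1}$ are the degenerate ones in which $g$ and $\overline{g}$ share factors or $h$ collapses to a M\"obius-type map; these are encoded by $\theta_4\neq0,\ a_3\in\mu_{q+1},\ a_3\notin\{x^3 \mid x\in\mu_{q+1}\}$ in the first branch, and by $\theta_4=0,\ \theta_1\neq0,\ \theta_3=\theta_2^{2q-1}$ together with the cubic $x^3+x+\theta_1^2/\theta_2^{q+1}$ having no $\fq$-root in the second. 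When any of these fails, the quartics $g,\overline g$ are in general position, $\cC$ carries a low-degree component, and I would prove its absolute irreducibility by examining the places over $X=\infty$ and the intersection with the diagonal, ruling out a splitting into Galois-conjugate lines or conics by degree and $\fq$-invariance arguments. The cubic and cube-power conditions enter precisely here: a root of the cubic over $\fq$, or $a_3$ being a cube in $\mu_{q+1}$, corresponds to an extra $\fq$-rational branch that would otherwise force a reducible factorization; their failure is exactly what keeps the component absolutely irreducible and the Hasse--Weil count effective, completing the proof.
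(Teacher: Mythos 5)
Your opening moves coincide with the paper's own setup: the Zieve-type reduction to the quartic rational map on $\mu_{q+1}$, the difference-quotient curve, the Cayley-type transform to a curve $\cH$ defined over $\fq$, and the Hasse--Weil threshold giving $q\ge 512$ are exactly Section 3 and Proposition \ref{pr:A-P} of the paper. But everything after that — which is where all the substance of this particular theorem lies — is only announced, not executed, and the sketch you give is in places logically backwards. The most serious omission is the branch $\theta_2\ne 0$, $\theta_4\ne 0$, in which one must prove that $f$ is \emph{never} a PP: this is not a matter of the quartics being ``in general position''. The paper's Section \ref{sec:6} handles it by enumerating every admissible splitting type of the degree-$\le 6$ curve $\cH$ (six lines, three conics, two cubics in degree six; four lines, two conics in degree four) and ruling out, through explicit coefficient and resultant computations on the $\gamma_{i,j}$ (computer-assisted), every factorization into Frobenius-conjugate components not defined over $\fq$; only then does Hasse--Weil apply to the surviving $\fq$-rational component. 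Your ``degree and $\fq$-invariance arguments'' name the kind of constraint involved but cannot by themselves yield this conclusion, which genuinely depends on the specific coefficients.

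The second gap is that your final paragraph inverts the role of the cube and cubic-root conditions. When $a_3$ \emph{is} a cube in $\mu_{q+1}$, or when $x^3+x+\theta_1^2/\theta_2^{q+1}$ \emph{has} a root in $\fq$, the curve $\cC$ is not absolutely irreducible and does not acquire an irreducible component to which Hasse--Weil is applied: it splits into explicit lines, respectively three explicit conics, and non-injectivity is proved by exhibiting explicit points of $\cC\cap\mu_{q+1}^2$ off the diagonal lying on those components (Corollaries \ref{cr:a3noncube} and \ref{cr:final}); Hasse--Weil plays no role in these subcases. Conversely, when those conditions fail while the other hypotheses hold — i.e., precisely in the situation of Theorem \ref{th:1.4} — the curve still splits, but into components that are conjugate under Frobenius and not defined over the base field, which is exactly why $f$ can be a permutation. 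If, as you assert, failure of the cube/root conditions ``kept the component absolutely irreducible and the Hasse--Weil count effective,'' you would manufacture a collision and contradict the sufficiency already proved in \cite{zheng2022more}. Relatedly, you never introduce the device that makes the splitting analysis tractable in the cases $\theta_2=0$ and $\theta_4=0$: the quotient of $\cC$ by the involution $(X,Y)\mapsto(Y,X)$, namely the cubic curve $\cD$ in the coordinates $u=X+Y$, $v=XY$, whose singular-point analysis is what actually decides between absolute irreducibility, splitting over $\fqs$ with explicit points, and splitting into conjugate non-rational components.
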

	
	\begin{corollary}
	{\em{Conditions} \eqref{eq:1}} and	{\eqref{eq:2}} of {\em{Theorem \ref{th:1.4}}} are also necessary.
	\end{corollary}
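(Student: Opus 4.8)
Since Theorem \ref{th:1.4} already establishes sufficiency, the Corollary is immediate once the converse is known; the entire content therefore lies in proving Theorem \ref{th:main2}. My plan is to convert ``$f$ is a PP of $\fqs$'' into the bijectivity of an explicit degree-four rational map on the unit circle $\mu_{q+1}$, and then to run a Hasse--Weil argument showing that whenever the stated relations among the $\theta_i$ fail, an associated ``collision curve'' acquires an absolutely irreducible component defined over $\fq$, which forces a repeated value and destroys the permutation property.

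First I would apply the standard Park--Lee/Zieve reduction for Niho-type maps. Writing $f(x)=x\,h(x^{q-1})$ with $h(u)=1+a_1u^{1/4}+a_2u+a_3u^{3/4}$, the polynomial $f$ permutes $\fqs$ if and only if $g(u)=u\,h(u)^{q-1}$ permutes $\mu_{q+1}$. As $\gcd(4,q+1)=1$, the substitution $u=v^4$ is a bijection of $\mu_{q+1}$, and using $v^q=v^{-1}$ on the circle the map takes the conjugate-reciprocal form
\[
\phi(v)=\frac{v^4+a_1^q v^3+a_3^q v+a_2^q}{a_2 v^4+a_3 v^3+a_1 v+1}=\frac{N(v)}{D(v)},\qquad N(v)=v^4\,\overline{D}(1/v),
\]
so that $\phi(\mu_{q+1})\subseteq\mu_{q+1}$ automatically and the PP property becomes exactly the injectivity of $\phi$ on $\mu_{q+1}$.

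Next I would form the collision curve
\[
\cC:\quad \Psi(X,Y):=\frac{N(X)D(Y)-N(Y)D(X)}{X-Y}=0,
\]
whose points $(\alpha,\beta)$ with $\alpha\neq\beta$ and $\alpha,\beta\in\mu_{q+1}$ are precisely the obstructions to injectivity. Fixing a Möbius transformation identifying $\mu_{q+1}$ with $\mathbb{P}^1(\fq)$ --- under which the conjugate-reciprocal symmetry $N(v)=v^4\overline{D}(1/v)$ turns into $\fq$-rationality of the conjugated map --- these obstructions become the $\fq$-rational points, off a bounded exceptional locus (the diagonal, the poles $D=0$, and points at infinity), of a plane curve $\tilde{\cC}$ defined over $\fq$ of bounded degree and bounded genus. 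The key dichotomy is then: if $\tilde{\cC}$ has an absolutely irreducible component defined over $\fq$, Hasse--Weil gives it at least $q+1-c\sqrt{q}-c'$ rational points for absolute constants $c,c'$, so for $m\ge 9$ it carries one outside the exceptional locus, producing distinct $\alpha,\beta\in\mu_{q+1}$ with $\phi(\alpha)=\phi(\beta)$ and contradicting that $f$ is a PP. Hence, assuming $f$ is a PP, $\tilde{\cC}$ has no absolutely irreducible $\fq$-component: all its nontrivial components are defined over proper extensions and occur in Galois-conjugate packets.

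Finally I would perform the factorization analysis of $\Psi$, where the stated conditions emerge. Expanding $N(X)D(Y)-N(Y)D(X)$ and dividing by $X-Y$ yields a symmetric polynomial whose coefficients are governed by the quantities $\theta_1,\theta_2,\theta_3,\theta_4,\theta_4'$, and these control how $\cC$ splits. Treating the regimes $\theta_2=0$ and $\theta_2\neq 0$ separately, I expect the absence of an $\fq$-rational absolutely irreducible component to force, in the first regime, $\theta_4\neq 0$ together with $a_3\in\mu_{q+1}$ and $a_3\notin\{x^3\mid x\in\mu_{q+1}\}$, and in the second regime $\theta_4=0$, $\theta_1\neq 0$ and $\theta_3=\theta_2^{2q-1}$, leaving a residual cubic component that is absolutely irreducible over $\fq$ exactly when $x^3+x+\theta_1^2/\theta_2^{q+1}$ has no root in $\fq$ (a linear $\fq$-factor of that cubic being the same datum as an $\fq$-root). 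The main obstacle is precisely this last step: one must enumerate all components of $\tilde{\cC}$, determine their exact fields of definition as functions of the $\theta_i$, and match the ``no $\fq$-component'' condition to the listed relations case by case, while simultaneously verifying that each remaining component has genus small enough for the Hasse--Weil estimate to be effective already at $m\ge 9$ rather than only asymptotically.
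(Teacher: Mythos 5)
Your framework coincides with the paper's: the Zieve reduction to the rational map $p(x)$ on $\mu_{q+1}$, the collision curve $\cC\colon (p(X)-p(Y))/(X-Y)=0$, the M\"obius identification of $\mu_{q+1}$ with $\fq\cup\{\infty\}$ producing an $\fq$-rational model $\cH$, and the Hasse--Weil dichotomy are exactly Proposition \ref{pr:A-P} and Section 3 of the paper. But everything after that---which is where the whole proof of Theorem \ref{th:main2}, and hence of the Corollary, actually lives---is left as a statement of expectations (``I expect the absence of an $\fq$-rational component to force\ldots''), and you yourself flag the enumeration of components as the main obstacle. That enumeration is the paper's Sections \ref{sec:4}--\ref{sec:6}, and it is not routine: the device you are missing is the quotient curve $\cD=\cC/\mathfrak{G}$ under $(X,Y)\mapsto(Y,X)$, obtained by setting $u=X+Y$, $v=XY$, which replaces the degree-$6$ curve by a cubic whose singular points can be analyzed by hand (Propositions \ref{pr:absirr}, \ref{pr:t1=0}, \ref{pr:t1ne0}), together with a separate argument excluding the one factorization pattern invisible to the quotient, namely two cubics swapped by the involution (Remark \ref{rk:cubic}, Proposition \ref{pr:union}). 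Moreover, your two-regime split ($\theta_2=0$ versus $\theta_2\ne0$) hides the heaviest case, $\theta_2\ne0$ and $\theta_4\ne0$, where one must show $f$ is \emph{never} a PP by exhaustively excluding every Galois-conjugate factorization of $\cH$ (six lines, three conics, two cubics in degree $6$; then the degenerate degree-$5$ and degree-$4$ configurations), which the paper carries out via resultant computations in Section \ref{sec:6} and the Appendix. Nothing in your proposal engages with any of this.

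There is also a genuine logical error at the one point where you do commit to a concrete claim: in the regime $\theta_4=0$, $\theta_1\ne0$, $\theta_3=\theta_2^{2q-1}$, you assert a residual cubic component that is ``absolutely irreducible over $\fq$ exactly when $x^3+x+\theta_1^2/\theta_2^{q+1}$ has no root in $\fq$''. This inverts, and conflates, the two notions on which the theorem turns. When the trinomial has no root in $\fq$, Lemma \ref{lm:z123} shows it has no root in $\fqs$ either, and the curve then splits into three Galois-conjugate conics \emph{not} defined over $\fq$: it is $\fq$-irreducible but not absolutely irreducible, which is precisely why Hasse--Weil gives nothing and $f$ can be (and, by Theorem \ref{th:1.4}, is) a PP. If that component really were absolutely irreducible over $\fq$, your own Hasse--Weil step would contradict the sufficiency half of Theorem \ref{th:1.4}. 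The correct dichotomy, which the paper proves, runs the other way: a root in $\fq$ makes $\cC$ split into conics carrying explicit points of $\cC\cap\mu_{q+1}^2$ off the diagonal (so $f$ is not a PP, Corollary \ref{cr:final}), while no root in $\fq$ leaves only conjugate non-$\fq$-components. As written, your sketch would prove the negation of the statement in this regime, so the gap is not merely one of missing detail.
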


	\section{Algebraic curves and necessary conditions}
	It is well known that polynomials of the form $f(x)=x h(x^{q-1})$ permutes $\fqs$ if and only if $g(x)=xh(x)^{q-1}$ permutes the set $\mu_{q+1}$ of the $(q+1)$-roots of unity in $\fqs$. See \cite[Theorem 1.2]{Zieve_mq+1}.
	For $f(x)$ in Equation \eqref{eq:f(x)} this means to prove that the rational function
	\[p(x)=\frac{x^4+a_1^{q}x^3+a_3^qx+a_2^q}{a_2x^4+a_3x^3+a_1x+1}\]
	permutes $\mu_{q+1}$, see \cite[Section 5]{zheng2022more} for more details.
	
	Let $\cC$ be the plane curve associated to $p(x)$, with equation $F(X,Y)=(p(X)-p(Y))/(X-Y)=0$, that is
	
	\begin{equation}\label{eq:CurveC}
		\begin{aligned}
			F(X,Y)=&\frac{\left(a_1 Y+{a_2} Y^4+{a_3} Y^3+1\right) \left(X^3 {a_1}^q+{a_2}^q+{a_3}^q X+X^4\right)}{X+Y}+ \\
			&	+ \frac{\left({a_1} X+{a_2} X^4+{a_3} X^3+1\right) (Y^3 {a_1}^q+{a_2}^q+Y {a_3}^q+Y^4 )}{X+Y}=0,
		\end{aligned}
	\end{equation} 
	or equivalently
	\begin{equation}\label{eq:C}
		\begin{aligned}
			F(X,Y)=&	\theta_3^q + \theta_3 X^3 Y^3 + 
			\theta_4 X Y (X + Y) +  \theta_4'(X + Y)^3 +\\
			&+\theta_2 (X Y  + (X + Y)^2) + \theta_2^q (X^2 Y^2  + X Y (X + Y)^2)=0.
		\end{aligned}
	\end{equation}
	$\cC$ is a curve defined over $ \fqs $ and $p(x)$ permutes $\mu_{q+1}$ if and only if there are no points $(X,Y) \in \cC \cap \mu_{q+1}^2$ such that $X \ne Y $.
	Since understanding whether or not $\cC$ has points in the set $\mu_{q+1}$ is not always an easy task to do, we will consider also the following idea.
	Choose an element $e \in \fqs$ such that $e^q=e+1$. Every $ x \in \mu^2_{q+1}$ different from $1$ can be written as $x=\frac{X+e}{X+e+1}$, where $X$ runs over $\fq$. 
	Then $p(x)$ permutes $\mu_{q+1}$ if and only if $p(\phi(x)) \colon \fq \cup \{\infty\} \to \mu_{q+1}$, where $\phi(x)=\frac{x+e}{x+e+1}$, $\phi(\infty)=1$, is a bijection.
	This is equivalent to ask that $H(x)=p(\phi(x))_{|\fq} \colon \fq \to \mu_{q+1} \setminus \{(a_1+a_2+a_3+1)^{q-1}\}$ is a bijection.
	Let $\mathcal{H}$ be the affine curve defined by $(H(X)-H(Y))/(X-Y)=0$.
	It is easily checked that $\cH$ is defined over $\fq$ and therefore $H(x)$ is a bijection if and only if $\cH$ has no $\fq$-rational points off the line $X=Y$.

	Moreover, an equation for $\cH$ is given by $H(X,Y)=(X+e+1)^3(Y+e+1)^3F(\phi(X),\phi(Y))$ and $\cH$ is $\fqs$-birationally equivalent to $\cC$: let \[\psi(X,Y)=\Big(\frac{X(e+1)+e}{X+1},\frac{Y(e+1)+e}{Y+1}\Big).\]
	then $ (1+X)^3(1+Y)^3H(\psi(X,Y))=F(X,Y) $ and the two curves are $\fqs$-birationally equivalent.
	
	\begin{proposition}\label{pr:A-P}
		Let $q \geq 512$. If $f(x) \in \fqs[x]$ is a PP then $\cC$ is not absolutely irreducible over $\fqs$.  
	\end{proposition}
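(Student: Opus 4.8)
The plan is to prove the contrapositive: assuming that $\cC$ is absolutely irreducible over $\fqs$, I will show that $f$ cannot be a PP. By the reduction recalled just before the statement, $f$ is a PP of $\fqs$ if and only if $H(x)=p(\phi(x))_{|\fq}$ is a bijection, equivalently if and only if the curve $\cH$ has no $\fq$-rational affine points off the diagonal $X=Y$. The decisive feature is that $\cH$ is defined over the \emph{small} field $\fq$, while being $\fqs$-birationally equivalent to $\cC$. Since absolute irreducibility is a geometric property invariant under birational equivalence over $\overline{\fq}$, the hypothesis on $\cC$ forces $\cH$ to be an absolutely irreducible curve defined over $\fq$. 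The strategy is then to feed $\cH$ into the Hasse--Weil bound over $\fq$ and produce an $\fq$-rational point with $X\ne Y$, which directly contradicts the PP hypothesis.

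First I would pin down the degree and genus. From Equation \eqref{eq:C} the total degree of $F(X,Y)$ is $6$, contributed by the term $\theta_3 X^3Y^3$, so $\cC$ and hence $\cH$ is a plane curve of degree at most $6$; consequently the geometric genus $g$ of the normalization $\widetilde{\cH}$ satisfies $g\le\binom{6-1}{2}=10$. Next I would isolate the ``bad'' loci that must be discarded: the points of $\widetilde{\cH}$ lying over the singular points of $\cH$, over its points at infinity, and over its points on the line $X=Y$. Each of these sets is controlled purely in terms of $d=6$ (by B\'ezout for the diagonal and infinity, and by the bound on $\delta$-invariants for the singular locus), hence is bounded by an absolute constant $C'$ independent of $q$.

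I would then apply Hasse--Weil to $\widetilde{\cH}$ over $\fq$, giving $\#\widetilde{\cH}(\fq)\ge q+1-2g\sqrt q\ge q+1-20\sqrt q$. Removing the $\le C'$ points lying over the singular, infinite, and diagonal loci, and using that distinct smooth points of $\cH$ pull back injectively, the number of smooth affine $\fq$-rational points of $\cH$ with $X\ne Y$ is at least
\[
q+1-20\sqrt q-C'.
\]
For $q\ge 512$ one computes $q-20\sqrt q=\sqrt q(\sqrt q-20)\ge 16\sqrt2\,(16\sqrt2-20)>59$, so this count is strictly positive once $C'$ is the (small, explicit) constant from the previous step. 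Any such point yields two distinct elements of $\mu_{q+1}$ with the same image under $p$, contradicting that $f$ is a PP; therefore $\cC$ is not absolutely irreducible over $\fqs$.

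I expect the main obstacle to be the bookkeeping that makes $C'$ explicit and safely below the $\approx 60$ points guaranteed by the Weil estimate at $q=512$: one must bound the number of $\fq$-points of $\widetilde{\cH}$ above the singular points of $\cH$ (which calls for the Aubry--Perret refinement of Hasse--Weil for singular curves), above the points at infinity, and above the diagonal intersection $\cH\cap\{X=Y\}$, and check that their total stays comfortably under that threshold. A secondary technical point is to confirm that $\cH$ is genuinely defined over $\fq$ and that the birational maps $\phi$ and $\psi$ recorded above do not create spurious diagonal or infinite points requiring separate accounting; both follow from the explicit change of variables already written down.
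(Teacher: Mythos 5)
Your proposal is correct and follows essentially the same route as the paper: transfer absolute irreducibility from $\cC$ to the curve $\cH$ defined over $\fq$ via the stated birational equivalence, apply the Hasse--Weil bound with $2g\le 20$, discard the boundedly many $\fq$-points on the diagonal and at infinity, and pull the surviving affine point back to two distinct elements of $\mu_{q+1}$ with the same image under $p$, contradicting the PP hypothesis. The only difference is bookkeeping: you pass to the normalization and invoke the Aubry--Perret refinement to account for points over singularities (your constant $C'$ is at most about $32$, safely below the margin $q+1-20\sqrt{q}\approx 60$ at $q=512$), whereas the paper applies the bound directly to the plane model of $\cH$ and subtracts only the $12$ diagonal/infinity points.
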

	\begin{proof}
		If $\cC$ is absolutely irreducible over $\fqs$ then $\cH$ is absolutely irreducible over $\fq$. Since $ \cH $ has degree at most $6$, the Hasse-Weil bound implies that $\cH$ has at least an affine rational point $(a,b)$ with $a \ne b$ whenever
		\begin{equation}\label{eq:q}
			q+1-20\sqrt{q}-12 \ge 0,
		\end{equation}
		where $12$ is the maximum number of points belonging either to the line $X=Y$ or to the infinity line.
		Equation \eqref{eq:q} is satisfied for every integer greatest than $421$. Thus, if $q=2^m \geq 512$, $\cH$ has an $\fq$-rational point $(a,b)$, with $a \ne b$.
		Consequently, we obtain a point $\Big(\frac{a+e}{a+e+1},\frac{b+e}{b+e+1}\Big)=(a',b') \in \mu_{q+1}^2 $ such that \[ a' \ne b' \quad \mbox{ and } \quad p(a')=p(b'),\] which is in contrast with $f(x)$ being a PP of $\fqs$.
	\end{proof}
	
	Proposition \ref{pr:A-P} allows us to focus on $\cC$ to obtain necessary conditions on $f(x)$. However we will see that proving the absolutely irreducibility of $\cC$ is not always possible. Thus, in some cases, we will exhibit explicitly points belonging to $\cC \cap \mu_{q+1}^2$, off the line $X+Y=0$. 
	
	Understanding whether $\cC$ is reducible or not may be difficult. For this reason, one can ask for a transformation that sends $\cC$ to a lower degree curve easier to study.
	In particular, the group $\mathfrak{G}$ generated by $ (X,Y) \mapsto (Y,X) $ is a subgroup of $\Aut(\cC)$, the automorphism group of $\cC$.
	Furthermore, let $u=X+Y$, $v=X Y$ and $G(u,v)=F(X,Y)$. 
	Let $\cD$ be the curve defined by  $G(u,v) =0$, that is
	\begin{equation}\label{eq:D}
		\cD \colon \theta_3^q + \theta_4' u^3 + \theta_4 u v + \theta_3 v^3 + \theta_2 (u^2 + v) + 
		\theta_2^q v (u^2 + v) =0,
	\end{equation}
	which is the quotient curve $\cC \textfractionsolidus \mathfrak{G}$.
	When convenient, we will study the connection between $\cC$ and $\cD$ to derive information on the irreducibility of $\cC$.
	
	The paper is organized as follows: Sections \ref{sec:4} and \ref{sec:5} are dedicated to the cases $\theta_{2}=0$ and $\theta_4=0$ respectively, strongly using the connection between $\cC$ and $\cD$.
	Section \ref{sec:6} will be devoted to the case $\theta_{2}\ne 0$ and $\theta_4\ne 0$ and we will focus on the factorization of $\cH$ showing that there must always be an absolutely irreducible component defined over $\fq$ in that case.
	
	\section{Case $\theta_2=0$}\label{sec:4}
	We note that if $\theta_{1}=0$ then $\theta_4 \ne 0$ and $a_3 \in \mu_{q+1}$. In fact since $a_1=a_3^q a_2$ and $a_1^q=a_2^q a_3$, then 
	\[\theta_4=a_1^{q+1}+a_3^{q+1}=a_3^{q+1}(1+a_2^{q+1})=a_3^{q+1}\theta_4,\]
	and $\theta_3=a_3\theta_4$ which justifies $\theta_4 \ne 0$ and $a_3\in \mu_{q+1}$, otherwise $f$ is trivially not a PP (see for example \cite[Result 1.2]{hou2023new}).
	When this happens the equation of $\cD$ is exactly
	\[G(u,v)=a_3^q + u^3 + u v + a_3 v^3,\]
	
	The latter equation needs to be studied also when $\theta_1 \ne 0$, this is why we will see both cases together at the end of this section. See Remark \ref{rk:t1=0}.
	
	Let $\theta_1 \ne 0$. In the following remark, we recall some properties from \cite{zheng2022more}.
	\begin{remark}\label{rk:t2=0}
		When $\theta_2=0$ and $\theta_1 \ne 0$ the following hold
		\[\theta_4=a_3^{q+1}\theta_4', \quad \theta_3=a_3 \theta_4' \quad\mbox{ and} \quad \theta_4' \ne 0 \]
		since $\theta_4 \ne 0$ (and $a_3 \ne 0$), otherwise $f$ is trivially not a PP since (see for example \cite[Result 1.2]{hou2023new}).
	\end{remark}
	Therefore $ \cC $ becomes $F(X,Y)=0$ with
	\begin{equation*}\label{eq:Ct20}
		F(X,Y)=	a_3^q + a_3 X^3 Y^3 + 
		a_3^{q+1} X Y (X + Y) +  (X + Y)^3 
	\end{equation*}
	and	$\cD$ becomes $G(u,v)=0$ with
	\[G(u,v)=a_3^q + u^3 + a_3^{q+1} u v + a_3 v^3.\]
	
	\begin{proposition}\label{pr:absirr}
		The curve $ \cD $ defined by Equation \eqref{eq:D} is absolutely irreducible if and only if $a_3 \notin \mu_{q+1}$.
	\end{proposition}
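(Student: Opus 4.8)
The plan is to treat $\cD$ as a plane cubic and reduce the whole question to the existence of a linear factor. In the present case $\theta_2=0$ the curve $\cD$ of \eqref{eq:D} is cut out by $G(u,v)=a_3^q+u^3+a_3^{q+1}uv+a_3v^3$ (the form recorded just above), whose degree-$3$ part is $u^3+a_3v^3$; consequently any nontrivial factorization of $G$ over $\overline{\fq}$ must split the total degree as $1+2$, so $\cD$ fails to be absolutely irreducible exactly when $G$ has a linear factor. Since the coefficient of $u^3$ equals $1\ne0$, substituting $v=\beta'$ never annihilates $G$, so no factor $v+\beta'$ occurs and every possible linear factor has the form $u+\alpha v+\beta$ with $\alpha,\beta\in\overline{\fq}$. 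The problem therefore becomes: for which $a_3$ does such a factor exist?

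To detect a factor $u+\alpha v+\beta$ I would substitute $u=\alpha v+\beta$ (legitimate since $-1=1$) and demand that $G(\alpha v+\beta,v)$ vanish identically in $v$. Expanding $(\alpha v+\beta)^3=\alpha^3v^3+\alpha^2\beta v^2+\alpha\beta^2 v+\beta^3$ and collecting coefficients yields the four conditions $\alpha^3=a_3$, $\alpha^2\beta=a_3^{q+1}\alpha$, $\alpha\beta^2=a_3^{q+1}\beta$, $\beta^3=a_3^q$. The first and last force $\alpha,\beta\ne0$, so the middle two both collapse to the single equation $\alpha\beta=a_3^{q+1}$. The crux is then a compatibility check: cubing this last relation and using $\alpha^3\beta^3=a_3\cdot a_3^q=a_3^{q+1}$ gives $a_3^{q+1}=a_3^{3(q+1)}$, i.e.\ $a_3^{2(q+1)}=1$; in characteristic two $x^2=1\iff x=1$, so a linear factor can exist only if $a_3^{q+1}=1$, that is $a_3\in\mu_{q+1}$.

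For the converse I would simply produce the factor when $a_3\in\mu_{q+1}$: then $a_3^{q+1}=1$ and $a_3^q=a_3^{-1}$, so choosing any cube root $\alpha$ of $a_3$ in $\overline{\fq}$ and setting $\beta=\alpha^{-1}$ satisfies $\alpha^3=a_3$, $\beta^3=\alpha^{-3}=a_3^{-1}=a_3^q$ and $\alpha\beta=1=a_3^{q+1}$, so $u+\alpha v+\beta$ divides $G$. Putting the two directions together, $\cD$ has a linear component if and only if $a_3\in\mu_{q+1}$, which by the degree argument of the first paragraph is exactly the statement that $\cD$ is absolutely irreducible if and only if $a_3\notin\mu_{q+1}$.

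I do not anticipate a genuine obstacle: the computation is short and the only delicate points are the bookkeeping of the coefficient system, the remark that the $v^2$- and $v$-equations are redundant once $\alpha,\beta\ne0$, and the characteristic-two simplification $a_3^{2(q+1)}=1\Rightarrow a_3^{q+1}=1$. The one structural input that must be stated cleanly is the equivalence \emph{a cubic is reducible $\iff$ it has a linear factor}, which rests on the leading form $u^3+a_3v^3$ being nonzero so that total degrees add to $3$.
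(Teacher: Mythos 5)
Your proof is correct (under this section's standing assumption $a_3 \ne 0$) but it takes a genuinely different route from the paper. You reduce absolute irreducibility of the cubic to the nonexistence of a linear factor $u+\alpha v+\beta$, and solve the coefficient system obtained by forcing $G(\alpha v+\beta,v)\equiv 0$: the conditions collapse to $\alpha^3=a_3$, $\beta^3=a_3^q$, $\alpha\beta=a_3^{q+1}$, whose compatibility gives $a_3^{2(q+1)}=1$, hence $a_3^{q+1}=1$ in characteristic two; conversely, any cube root $\alpha$ of $a_3$ with $\beta=\alpha^{-1}$ produces such a factor when $a_3\in\mu_{q+1}$. The paper instead runs a singularity analysis: it solves the system of partial derivatives, finds that every singular point is a double point of the form $(a_3^{q}\alpha^2,a_3^{q}\alpha)$ with $\alpha^3=a_3$, checks that such a point lies on $\cD$ exactly when $a_3^{q+1}=1$, and then invokes the facts that a reducible cubic is necessarily singular and that a cubic with three double points must split into three non-concurrent lines, so that absolute irreducibility here is equivalent to smoothness. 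Each route buys something used downstream: the paper's singular points $P_i$ are reused verbatim in the following proposition to decide over which field the components and their rationality live, while your computation hands you the explicit factors $u+\alpha_i v+\alpha_i^{-1}$, which is precisely the decomposition of Corollary \ref{cor:decD} --- so your method proves that corollary at no extra cost. One point you should state explicitly: the step ``the first and last equations force $\alpha,\beta\ne 0$'' uses $a_3\ne 0$; for $a_3=0$ the curve degenerates to $u^3=0$, which is reducible even though $0\notin\mu_{q+1}$, a degenerate case that the paper likewise sets aside (it is excluded by Remark \ref{rk:t2=0}, since otherwise $f$ is trivially not a PP).
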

	\begin{proof}
		If $a_3=0$ then $G(u,v)$ is not absolutely irreducible. Let $a_3 \ne 0$.
		Note that every singular point of $\cD$ is a double point. In fact, we have $\partial _{u v}G \ne 0$ and $\partial _{v u}G \ne 0$.
		The system of partial derivatives is 
		\begin{equation*}
			\begin{cases}
				\begin{aligned}
					&u^2 + a_3^{q+1} v=0\\
					&a_3^{q} u +  v^2=0
				\end{aligned}
			\end{cases}
		\end{equation*}
		
		and it implies that a point $P=(u,v)$ is singular if and only if $P=(a_3^{q+2/3}, a_3^{q+1/3})$ and $P \in \cD$ (note that the cubic roots of $a_3$ are not uniquely determined).
		More precisely $ G(P)=0 $ implies that
		\[a_3^q+a_3^{3q+2}=0\]
		which proves that $\cD$ is singular if and only if $a_3^{q+1}=1$.
		Furthermore, since the equation $v^3 =a_3^{3q+1}$ admits $3$ solutions in the algebraic closure $\overline{\mathbb{F}}_q$ of $\fqs$, we have three double points and the cubic is the union of three non concurrent lines.
		This means that $\cD$ is absolutely irreducible if and only if it is non-singular, namely $a_3 \notin \mu_{q+1}$.
	\end{proof}
	\begin{remark}\label{rk:cubic}
		Since $\mathfrak{G}$ is an automorphism group of $\cC$, there is only one situation in which $\cC$ is reducible whereas $\cD$ is not: when $\cC$ is the product of two cubics, which form an orbit under $\mathfrak{G}$. In fact, in that case, $\cD$ is a cubic curve, which may be irreducible. 
	\end{remark}
	
	\begin{proposition}\label{pr:union}
		The curve $\cC$ is the union of two cubic curves only if $a_3 \in \mu_{q+1}$.
	\end{proposition}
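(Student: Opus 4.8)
The plan is to argue directly from a hypothetical factorization $F=\lambda\,F_1F_2$ with $\deg F_1=\deg F_2=3$, exploiting that $F$ is invariant under the involution $\sigma\colon(X,Y)\mapsto(Y,X)$ generating $\mathfrak{G}$. Because $\sigma$ preserves $\cC$, it permutes the two cubic components; hence either $\sigma$ fixes each of them setwise, or it interchanges them. I would treat these two cases separately and show that each forces $a_3\in\mu_{q+1}$ (note the hypothesis already gives $\deg F=6$, so $a_3\neq 0$).

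The first case is short. If $\sigma$ fixes each $\cC_i$, then $F_i^{\sigma}=c\,F_i$ for a scalar with $c^2=1$, so $c=1$ in characteristic two and each $F_i$ is symmetric; by the fundamental theorem on symmetric functions $F_i=G_i(u,v)$ with $u=X+Y$, $v=XY$. Then $G=\lambda\,G_1G_2$, so $\cD$ is reducible, and Proposition \ref{pr:absirr} yields $a_3\in\mu_{q+1}$. The degenerate possibility $\cC_1=\cC_2$ also lands here, since $F=\lambda F_1^2$ again forces $F_1$ symmetric.

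The main work is the interchanging case, where after absorbing a constant we may write $F=\lambda\,F_1(X,Y)\,F_1(Y,X)$. Here I would compare homogeneous parts of $F_1=A_3+A_2+A_1+A_0$. Recall that $F$ has nonzero parts only in degrees $0,3,6$, with top part $a_3X^3Y^3$ and degree-three part $X^3+Y^3+(1+a_3^{q+1})(X^2Y+XY^2)$. Matching degree $6$ gives $\lambda A_3(X,Y)A_3(Y,X)=a_3X^3Y^3$; since a binary cubic whose product with its $X\leftrightarrow Y$ image is a scalar multiple of $(XY)^3$ must itself be a monomial (each of its linear factors has to be $X$ or $Y$), we get $A_3=\kappa X^iY^{3-i}$. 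Up to the symmetry $X\leftrightarrow Y$ it suffices to treat $A_3=\kappa X^3$ and $A_3=\kappa X^2Y$. For $A_3=\kappa X^3$, matching degrees $5$ and $4$ forces $A_2=A_1=0$, whence the degree-three part of $\lambda F_1F_1^{\sigma}$ is a multiple of $X^3+Y^3$ with no $X^2Y+XY^2$; comparison with the degree-three part of $F$ is possible only if $1+a_3^{q+1}=0$. For $A_3=\kappa X^2Y$ the same bookkeeping gives $A_2=p(X^2+XY)$ from degree $5$, $\kappa\beta=p^2$ from degree $4$, and $\alpha=\beta$, $A_0p=\alpha^2$ from degrees $2$ and $1$ (writing $A_1=\alpha X+\beta Y$); substituting these into the degree-three identity again yields $a_3^{q+1}=1$, and the constant term then matches automatically through $\lambda A_0^2=a_3^q$.

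Thus in every case $a_3\in\mu_{q+1}$, which is the assertion. I expect the main obstacle to be the sub-case $A_3=\kappa X^2Y$: unlike $A_3=\kappa X^3$, the intermediate forms $A_2$ and $A_1$ do not vanish, so one must carry them through the coefficient comparisons in degrees $5,4,2,1$ and verify that the resulting relations are jointly consistent precisely when $a_3^{q+1}=1$. Tracking the characteristic-two cancellations carefully (for instance that $X^3+Y^3+X^2Y+XY^2=(X+Y)^3$ and that $(X^2+XY)(Y^2+XY)=XY(X+Y)^2$) is where the computation has to be done with attention.
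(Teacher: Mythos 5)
Your proof is correct and rests on the same engine as the paper's: write the hypothetical factorization with the second cubic equal to the image of the first under $(X,Y)\mapsto(Y,X)$ and compare coefficients against $F$, forcing $a_3^{q+1}=1$ (your homogeneous-degree bookkeeping in the two subcases $A_3=\kappa X^3$ and $A_3=\kappa X^2Y$ checks out). The one genuine addition is that you also treat the case where the involution fixes each cubic separately --- each factor is then symmetric, hence descends to a factor of $\cD$, and Proposition \ref{pr:absirr} gives $a_3\in\mu_{q+1}$ --- a case the paper's proof passes over by asserting the swapped form is the only possible factorization; this gap is harmless in the paper because Remark \ref{rk:cubic} only ever requires the orbit case, but your version makes the proposition valid as a free-standing statement.
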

	\begin{proof}
		Since the action of $\mathfrak{G}$ is exchanging the $x$ with the $y$, the only possible factorization of $\cC$ is
		\begin{equation}\label{eq:cubics}
			\begin{aligned}
				&(a_{00} + a_{10}X + a_{20}X^2 + 
				a_{30}X^3 + a_{01}Y + a_{11}XY + 
				a_{21}X^2Y + a_{02}Y^2 + 
				a_{12}XY^2 + a_{03}Y^3)\\
				&(a_{00} + a_{01}X + a_{02}X^2 + 
				a_{03}X^3 + a_{10}Y + a_{11}XY + 
				a_{12}X^2Y + a_{20}Y^2 + 
				a_{21}XY^2 + a_{30}Y^3)=0	
			\end{aligned}
		\end{equation} 
		Note that the equation of $\cC$ is 
		\[a_3^q + X^3 + (a_3^{q+1}+1) X^2 Y + 
		(a_3^{q+1} +1)X Y^2 + Y^3 + a_3 X^3 Y^3=0.\]
		Thus, comparing the coefficients, we see that the only possibility for Equation \eqref{eq:cubics} is 
		\[a_{00}^2 + a_{00} a_{30} X^3 + a_{00} a_{30} Y^3 + a_{30}^2 X^3 Y^3=0.\]
		However, this is admissible if and only if $a_3^{q+1}+1=0$, that is $a_3 \in \mu_{q+1}$.
	\end{proof}
	
	\begin{corollary}\label{cr:Cabsirr}
		Let $a_3 \notin \mu_{q+1}$. The curve $\cC$ is absolutely irreducible.
	\end{corollary}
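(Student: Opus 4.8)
The plan is to deduce the absolute irreducibility of $\cC$ from that of its quotient $\cD$, exploiting the degree-$2$ cover $\cC \to \cD = \cC/\mathfrak{G}$. Since $a_3 \notin \mu_{q+1}$, Proposition \ref{pr:absirr} already guarantees that $\cD$ is absolutely irreducible, so the whole task reduces to transferring this property upward along the projection $\pi\colon (X,Y) \mapsto (u,v) = (X+Y,XY)$.

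First I would record the structural fact that $\pi$ is a finite morphism of degree $2$: indeed $k[X,Y] = k[u,v][X]$ with $X$ a root of $T^2 + uT + v$, so $k[X,Y]$ is integral of degree $2$ over $k[u,v]$, and this persists in characteristic two. Because finite morphisms preserve dimension, each irreducible component of $\cC$ is mapped onto an irreducible component of $\cD$; as $\cD$ is irreducible, every component of $\cC$ lies over $\cD$, and the group $\mathfrak{G} = \langle (X,Y)\mapsto(Y,X)\rangle$ permutes these components.

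Next I would argue by contradiction. Assume $\cC$ is reducible. Since $|\mathfrak{G}| = 2$ and $\cD$ is irreducible, the components of $\cC$ form a single $\mathfrak{G}$-orbit, of size $1$ or $2$. A component fixed by $\mathfrak{G}$ would have to be all of $\cC$, contradicting reducibility; hence $\cC$ splits into exactly two components swapped by $\mathfrak{G}$. Comparing degrees---$\cC$ has total degree $6$ because of the term $a_3 X^3 Y^3$, and two components swapped by the linear automorphism $(X,Y)\mapsto(Y,X)$ must share the same degree---forces each component to be a cubic. This is exactly the configuration isolated in Remark \ref{rk:cubic}. But Proposition \ref{pr:union} shows that $\cC$ can be a union of two cubics only when $a_3 \in \mu_{q+1}$, contrary to our hypothesis. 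This contradiction shows that $\cC$ is absolutely irreducible.

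The only delicate point is the lifting step itself: making sure that irreducibility of the quotient leaves the two-swapped-cubics splitting as the sole avenue for reducibility of $\cC$. That content is packaged in Remark \ref{rk:cubic}, and the degree bookkeeping (equal-degree swapped components summing to $6$, hence cubics) is what pins it down. Once this is in place, the corollary is an immediate combination of Propositions \ref{pr:absirr} and \ref{pr:union}, so I expect no further computation to be needed.
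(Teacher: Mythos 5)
Your proof is correct and follows essentially the same route as the paper: Proposition \ref{pr:absirr} gives absolute irreducibility of $\cD$, the quotient structure under $\mathfrak{G}$ (the content of Remark \ref{rk:cubic}) reduces any possible reducibility of $\cC$ to a splitting into two cubics swapped by $(X,Y)\mapsto(Y,X)$, and Proposition \ref{pr:union} rules that out when $a_3 \notin \mu_{q+1}$. The extra detail you supply (the degree-$2$ finite morphism $(X,Y)\mapsto(X+Y,XY)$, the orbit argument, and the degree bookkeeping) is just an expansion of what the paper packages into Remark \ref{rk:cubic}.
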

	\begin{proof}
		Proposition \ref{pr:absirr} implies that for $a_3 \notin \mu_{q+1}$ the curve $\cD$ is absolutely irreducible. The proof follows from Remark \ref{rk:cubic} together with Proposition \ref{pr:union}.
	\end{proof}
	We consider now the case when $\cD$ is not absolutely irreducible. In this case we have 
	\[G(u,v)=a_3^q + u^3 + u v + a_3 v^3,\]
	since $a_3 \in \mu_{q+1}$.
	\begin{remark}\label{rk:t1=0}
		Note that this is the same equation obtained for $\theta_{1}=0$, hence what follows also applies for $\theta_{1}=0$.
	\end{remark}
	\begin{lemma}\label{lm:cubes}
		Let $q=2^m$ and let $a_3$ be a cube in $\mu_{q+1}$. Then the equation $x^3 = a_3$ admits exactly $3$ solutions over $\fqs$.
	\end{lemma}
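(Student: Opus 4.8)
The plan is to work inside the cyclic group $\fqs^*$, which has order $q^2-1$, and to count the preimages of $a_3$ under the cubing map $x\mapsto x^3$. The crucial arithmetic observation is that $3$ divides $q^2-1$ whenever $q=2^m$: indeed $q^2=4^m\equiv 1 \pmod 3$, so $3\mid q^2-1$ for every $m$. Consequently $\gcd(3,q^2-1)=3$, which means the cubing endomorphism of $\fqs^*$ has kernel of size exactly $3$; equivalently, a primitive cube root of unity $\omega$ already lies in $\fqs$, and the full set of cube roots of unity is $\{1,\omega,\omega^2\}\subseteq \fqs$.

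Next I would invoke the hypothesis that $a_3$ is a cube in $\mu_{q+1}$: choose $y\in\mu_{q+1}\subseteq\fqs^*$ with $y^3=a_3$. Then the three elements $y,\,y\omega,\,y\omega^2$ all lie in $\fqs$ and all satisfy $x^3=a_3$, and they are pairwise distinct because $\omega\neq 1$ (in characteristic two the polynomial $X^2+X+1$ is separable, since its derivative is the constant $1$, and it does not vanish at $X=1$, so its two roots give genuine nontrivial cube roots of unity). Since a cubic equation over a field has at most three roots, these are all of them, which yields exactly $3$ solutions over $\fqs$. Abstractly this is nothing more than the general fact that in a cyclic group the equation $x^n=a$ has exactly $\gcd(n,N)$ solutions once it is solvable, applied with $N=q^2-1$, $n=3$, $\gcd=3$, the solvability being guaranteed precisely by $a_3$ being a cube.

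There is essentially no hard step in this argument; the only point requiring care is to keep straight in which field one is counting. The number of cube roots of $a_3$ that happen to lie in the subgroup $\mu_{q+1}$ equals $\gcd(3,q+1)$, which is $3$ only when $m$ is odd and $1$ when $m$ is even, since $2^m+1\equiv(-1)^m+1\pmod 3$. The content of the lemma is the stronger, parity-independent statement that over the ambient field $\fqs$ there are always exactly three cube roots, and it is this uniform count that we will exploit when analysing the factorization of $\cC$.
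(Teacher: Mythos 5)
Your proof is correct, and it reaches the conclusion by a route that differs in mechanism, if not in underlying arithmetic, from the paper's. The paper invokes the power-residue criterion cited from Hirschfeld: the equation $x^3=a_3$ has $3$ solutions in $\fqs$ provided $3\mid q^2-1$ and $a_3^{(q^2-1)/3}=1$, and then checks these two conditions. You instead construct the roots explicitly: the hypothesis gives $y\in\mu_{q+1}\subseteq\fqs^*$ with $y^3=a_3$, and since $3\mid q^2-1$ the three cube roots of unity already lie in $\fqs$, so $y,y\omega,y\omega^2$ are three distinct solutions, which is all a cubic can have. Both arguments hinge on the same fact $4^m\equiv 1\pmod 3$, but your version is self-contained and, more importantly, uniform in the parity of $m$: the paper verifies the residue condition by writing $a_3^{(q+1)/3}=1$, an exponent that is an integer only when $3\mid q+1$, i.e.\ only for odd $m$; for even $m$ the clean verification is the one implicit in your construction, namely $a_3^{(q^2-1)/3}=(y^3)^{(q^2-1)/3}=y^{q^2-1}=1$ for $y\in\mu_{q+1}$. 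Your closing observation, that the number of cube roots lying \emph{inside} $\mu_{q+1}$ is $\gcd(3,q+1)$ and hence depends on the parity of $m$, while the count over the ambient field $\fqs$ is always $3$, is precisely the distinction the lemma needs (the subsequent results place the roots $\alpha_i$ in $\fqs$, resp.\ in $\mu_{q+1}$), and stating it explicitly is a clarification the paper's one-line proof glosses over.
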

	\begin{proof}
		From \cite[pg. 4]{hirschfeld1979projective} the equation $x^3 = a_3$ has $3$ solutions if $3 \mid q^2-1$ and $a_3^\frac{q^2-1}{3}=1$. Since $q^2 \equiv 1 \pmod 3$ and $a_3^\frac{q+1}{3}=1$ the claim follows.
	\end{proof}
	
	\begin{proposition}
		Let $\cD$ be the curve with equation \eqref{eq:D}. Let $a_3$  be an element of $\mu_{q+1}$ and $\cD\colon G(u,v)=0$. Then $G(u,v)$ is irreducible over $\fqs$ if and only if $ a_3 \notin \{x^3 | x \in \mu_{q+1}\}$.
		Moreover, if $a_3 \in \{x^3 | x \in \mu_{q+1}\}$, then $\cD$ is the union of three (absolutely irreducible) linear components over $\fqs$.
	\end{proposition}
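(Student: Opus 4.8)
The plan is to work with the explicit cubic $G(u,v)=a_3^q+u^3+uv+a_3 v^3$ obtained from \eqref{eq:D} under the hypothesis $a_3\in\mu_{q+1}$ (so that $a_3^{q+1}=1$), and to read off the $\fqs$-factorization of $\cD$ from the Galois action on its components. By the proof of Proposition \ref{pr:absirr}, over the algebraic closure $\cD$ is the union of three distinct, non-concurrent lines, since $a_3\in\mu_{q+1}$ makes $G$ singular at three double points. First I would factor the degree-three part: fixing a cube root $c$ of $a_3$ and a primitive cube root of unity $\omega$, one has $u^3+a_3 v^3=\prod_{j=0}^{2}(u+\omega^j c\,v)$, so the three lines of $\cD$ have pairwise distinct slopes $\omega^0 c,\omega^1 c,\omega^2 c$, namely the three cube roots of $a_3$. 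This furnishes a labelling of the three components by the cube roots of $a_3$ that is compatible with the Galois action, which is the device I will exploit.

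Next I would analyse the action of the Frobenius $\Phi\colon x\mapsto x^{q^2}$ generating $\Gal(\overline{\mathbb F}_q/\fqs)$ on these three lines. Since $\F_4\subseteq\fqs$ (as $2\mid 2m$) we have $\Phi(\omega)=\omega$, and since $a_3\in\mu_{q+1}\subset\fqs$ is fixed by $\Phi$, the element $c^{q^2}$ is again a cube root of $a_3$, so $c^{q^2}=\omega^t c$ for some $t\in\{0,1,2\}$. Consequently $\Phi$ sends the line of slope $\omega^j c$ to the one of slope $\omega^{j+t}c$, i.e.\ it permutes the three components by the cyclic shift $j\mapsto j+t$. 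Hence only two possibilities occur: if $t=0$ every line is $\fqs$-rational and $G$ splits into three linear factors over $\fqs$, whereas if $t\neq 0$ the shift is a $3$-cycle, the three lines form a single $\Phi$-orbit, and $G$ is irreducible over $\fqs$. In particular no mixed ``line times irreducible conic'' factorization can arise, and $t=0$ holds exactly when $c\in\fqs$, that is, exactly when $a_3$ is a cube in $\fqs^{*}$.

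It remains to match ``$a_3$ is a cube in $\fqs^{*}$'' with the stated condition $a_3\in\{x^3\mid x\in\mu_{q+1}\}$. For $(\Leftarrow)$, Lemma \ref{lm:cubes} shows that if $a_3$ is a cube in $\mu_{q+1}$ then all three cube roots of $a_3$ lie in $\fqs$, so $c\in\fqs$; this also yields the ``moreover'' clause, the three components being $\fqs$-rational lines. For $(\Rightarrow)$, suppose $c\in\fqs^{*}$; then $c^{q+1}=\N_{\fqs/\fq}(c)\in\fq^{*}$ while $(c^{q+1})^3=a_3^{q+1}=1$, so $c^{q+1}\in\mu_3\cap\fq^{*}$. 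If $m$ is even then $3\mid q-1$, hence $\gcd(3,q+1)=1$ and cubing is a bijection of $\mu_{q+1}$, so $a_3\in\{x^3\mid x\in\mu_{q+1}\}$ automatically; if $m$ is odd then $3\nmid q-1$ forces $\mu_3\cap\fq^{*}=\{1\}$, whence $c^{q+1}=1$, $c\in\mu_{q+1}$ and again $a_3=c^3\in\{x^3\mid x\in\mu_{q+1}\}$. Combining the three paragraphs gives both the irreducibility criterion and the complete splitting.

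The step I expect to be the crux is the Frobenius-equivariant identification of the three lines with the cube roots of $a_3$, together with the observation that $\omega\in\fqs$: this is precisely what collapses the a priori possible factorization types into the clean dichotomy (irreducible or totally split) and rules out a linear-times-quadratic splitting. The arithmetic distinguishing ``cube in $\mu_{q+1}$'' from ``cube in $\fqs^{*}$'' is routine once the parity cases are separated.
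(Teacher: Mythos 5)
Your proof is correct, and its skeleton is the same as the paper's: both rest on the fact (from Proposition \ref{pr:absirr}) that for $a_3\in\mu_{q+1}$ the curve $\cD$ is, over $\overline{\F}_q$, a union of three distinct non-concurrent lines governed by the cube roots of $a_3$, and both reduce the rationality of the components to whether those cube roots lie in $\fqs$. The difference is the device used to track the Galois action: the paper follows the three singular points $P_i=(a_3^q\alpha_i^2,a_3^q\alpha_i)$ and invokes Lemma \ref{lm:cubes}, while you follow the slopes of the lines (the linear factors of the leading form $u^3+a_3v^3$) and observe that, since $\omega\in\F_4\subseteq\fqs$, the Frobenius permutes the three components by a cyclic shift. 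Your version makes explicit two points the paper leaves implicit: (i) the cyclic-shift argument rules out a line-times-conic splitting over $\fqs$ (equivalently, in the paper's language, the number of $\fqs$-rational singular points is $0$ or $3$, never $1$), so the dichotomy ``totally split or irreducible over $\fqs$'' is actually proved rather than tacitly assumed; and (ii) your parity argument establishes the equivalence between ``cube in $\fqs^{*}$'' and ``cube in $\mu_{q+1}$'' in both directions, whereas Lemma \ref{lm:cubes} as stated only supplies the direction needed for the splitting claim, not the converse needed to conclude irreducibility when $a_3$ is not a cube in $\mu_{q+1}$. In short, yours is a more self-contained rendering of essentially the same argument, and the extra care is where its value lies.
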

	\begin{proof}
		From Proposition \ref{pr:absirr} we know that the singular points of $\cD$ are $P_i=(a_3^q \alpha_i^2, a_3^q \alpha_i)$, for $ i=1,2,3 $, where $\alpha_i$ are the solutions in $ \overline{\Bbb F}_q $ of $x^3=a_3$.
		From Lemma \ref{lm:cubes}  $\cD$ has exactly three singular (double) points defined over $\fqs$ if and only if $a_3$ is a cube in $\mu_{q+1}$.
		Moreover, in that case, $\cD$ is the union of three (non-concurrent) lines passing through these points.
	\end{proof}
	
	\begin{corollary}\label{cor:decD}
		If $a_3$ is a cube in $\mu_{q+1}$, then $\cD$ decomposes as follows:
		\[ \cD \colon (u+\alpha_1 v + \alpha_1^{-1})(u+\alpha_2 v + \alpha_2^{-1})(u+\alpha_3 v + \alpha_3^{-1})=0 \]
	\end{corollary}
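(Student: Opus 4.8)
The plan is to leverage the preceding proposition, which already tells us that when $a_3$ is a cube in $\mu_{q+1}$ the cubic $\cD\colon G(u,v)=a_3^q+u^3+uv+a_3v^3=0$ splits over $\fqs$ into three non-concurrent lines whose nodes are, by Proposition \ref{pr:absirr}, the double points $P_i=(a_3^q\alpha_i^2,\,a_3^q\alpha_i)$, where $\alpha_1,\alpha_2,\alpha_3$ are the three roots of $x^3=a_3$ in $\fqs$ furnished by Lemma \ref{lm:cubes}. Since a union of three non-concurrent lines is completely determined by its three vertices, it remains only to identify the sides and check that their product reproduces $G$. I would do this by verifying directly that $G(u,v)=\prod_{i=1}^{3}\bigl(u+\alpha_i v+\alpha_i^{-1}\bigr)$.

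First I would record the relevant Vieta relations. Since the $\alpha_i$ are the roots of $x^3+a_3$, a polynomial with no quadratic or linear term, the elementary symmetric functions satisfy $\sum_i\alpha_i=0$, $\sum_{i<j}\alpha_i\alpha_j=0$ and $\alpha_1\alpha_2\alpha_3=a_3$; moreover $a_3\in\mu_{q+1}$ gives $a_3^{-1}=a_3^q$, whence $\alpha_i^{-1}=\alpha_i^2 a_3^{-1}=\alpha_i^2 a_3^q$.

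The most transparent route is geometric. For $i\neq j$ I would evaluate the linear form $L_i\colon u+\alpha_i v+\alpha_i^{-1}$ at the node $P_j$, obtaining $L_i(P_j)=a_3^q\bigl(\alpha_j^2+\alpha_i\alpha_j+\alpha_i^2\bigr)$; because $\alpha_i^3=\alpha_j^3=a_3$ we have $\alpha_i^3+\alpha_j^3=0$, and dividing by $\alpha_i+\alpha_j\neq0$ yields $\alpha_i^2+\alpha_i\alpha_j+\alpha_j^2=0$. Hence $L_i$ passes through both $P_j$ and $P_k$ for $\{i,j,k\}=\{1,2,3\}$, while $L_i(P_i)=a_3^q\alpha_i^2\neq0$. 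Thus each $L_i$ is exactly the side of the triangle joining two of the three vertices, so $\prod_i L_i$ and $G$ are the same triangle's cubic up to a constant; comparing the coefficient of $u^3$, which is $1$ on both sides, pins that constant to $1$. Alternatively one can simply expand $\prod_i L_i$ and match monomials: the $u^2v$, $uv^2$, $u^2$ and $v^2$ coefficients all vanish by the symmetric identities above (for instance $\sum_i\alpha_i^{-1}=(\sum_i\alpha_i)^2/a_3=0$), the $uv$ coefficient collapses to $\sum_i\alpha_i\alpha_i^{-1}=3=1$, and the $v^3$ and constant terms give $\alpha_1\alpha_2\alpha_3=a_3$ and $(\alpha_1\alpha_2\alpha_3)^{-1}=a_3^q$.

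There is no deep obstacle here: the corollary is a bookkeeping consequence of the previous proposition. The only points demanding care are the characteristic-two arithmetic, where the usual minus signs disappear and $3\equiv1$, and the fact, guaranteed by Lemma \ref{lm:cubes}, that the three cube roots $\alpha_i$ are distinct and lie in $\fqs$, which is precisely what makes the three sides genuinely distinct, non-concurrent, and defined over $\fqs$.
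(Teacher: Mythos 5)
Your proof is correct and takes essentially the same route as the paper: the paper likewise identifies each line $u+\alpha_i v+\alpha_i^{-1}=0$ as the side of the triangle joining the two double points $P_j$, $j\neq i$, into which $\cD$ was already shown to split, the key incidence check being the identity $\alpha_i^2+\alpha_i\alpha_j+\alpha_j^2=0$ (stated in the paper in the equivalent form $\alpha_1\alpha_2^2+\alpha_1^2\alpha_2=a_3$). Your additional verifications (the leading $u^3$ coefficient and the optional full monomial expansion) are just extra bookkeeping on top of the same idea.
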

	\begin{proof}
		We just note that $\alpha_1 \alpha_2^2+\alpha_1^2 \alpha_2=a_3$. The claim follows since the line $l_i\colon u+\alpha_i v +\alpha_i^{-1}=0$ is the one passing through $P_j=(a_3^q \alpha_j ^2, a_3^q \alpha_j)$, with $j \ne i$.
	\end{proof}
	
	After that, our next goal is to understand what happens when we go back to the curve $\cC: F(X,Y)=0$, with
	\[F(X,Y)=a_3^q + a_3 X^3 Y^3 + 
	X Y (X + Y) +  (X + Y)^3 \] 
	
	\begin{proposition}\label{pr:Cfact}
		Let $a_3$ be a cube in $\mu_{q+1}$. Then the curve $\cC$ splits into linear (absolutely irreducible) components over $\fqs$. More precisely,
		\[\cC\colon \Pi_{i=1}^3 (X+\alpha_i^{-1})(Y+\alpha_i^{-1})=0, \]
		where $\alpha_i^3=a_3$ for $i=1,2,3$.
	\end{proposition}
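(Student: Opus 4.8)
The plan is to lift the factorization of $\cD$ obtained in Corollary \ref{cor:decD} back to $\cC$ through the substitution $u = X+Y$, $v = XY$. Since $G(u,v) = F(X,Y)$ under this substitution and Corollary \ref{cor:decD} gives
\[\cD \colon \prod_{i=1}^3 (u + \alpha_i v + \alpha_i^{-1}) = 0,\]
we immediately obtain
\[F(X,Y) = \prod_{i=1}^3 \big((X+Y) + \alpha_i XY + \alpha_i^{-1}\big).\]
Thus $\cC$ is at worst a product of three conics, one for each cube root $\alpha_i$ of $a_3$.

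The key step is to check that each of these conics splits further into two lines. First I would observe the identity
\[(X+Y) + \alpha_i XY + \alpha_i^{-1} = \alpha_i\,(X + \alpha_i^{-1})(Y + \alpha_i^{-1}),\]
which is a direct verification: expanding the right-hand side gives $\alpha_i XY + X + Y + \alpha_i^{-1}$ in characteristic two. Multiplying the three factors together yields
\[F(X,Y) = (\alpha_1 \alpha_2 \alpha_3) \prod_{i=1}^3 (X + \alpha_i^{-1})(Y + \alpha_i^{-1}),\]
and since $\alpha_1\alpha_2\alpha_3 = a_3 \ne 0$ is the product of the roots of $x^3 = a_3$, the leading scalar is irrelevant for the associated curve. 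This gives exactly the claimed decomposition.

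Finally, I would address the field of definition and the absolute irreducibility of the components. Each factor $X + \alpha_i^{-1}$ and $Y + \alpha_i^{-1}$ is linear, hence absolutely irreducible; what remains is to confirm the components are defined over $\fqs$. This is precisely where Lemma \ref{lm:cubes} enters: since $a_3$ is a cube in $\mu_{q+1}$, the equation $x^3 = a_3$ has all three of its solutions $\alpha_1, \alpha_2, \alpha_3$ in $\fqs$, so every $\alpha_i^{-1} \in \fqs$ and each of the six lines is defined over $\fqs$.

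I do not anticipate a genuine obstacle here: the whole argument is a transfer of the already-established splitting of $\cD$, and the only computation of substance is the one-line factorization of each conic. The only points requiring care are bookkeeping the constant $\alpha_1\alpha_2\alpha_3$ and confirming, via Lemma \ref{lm:cubes}, that the $\alpha_i$ are genuinely $\fqs$-rational rather than merely lying in the algebraic closure.
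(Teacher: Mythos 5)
Your proof is correct and takes essentially the same route as the paper: lift the factorization of $\cD$ from Corollary \ref{cor:decD} through $u=X+Y$, $v=XY$, and observe that each conic $X+Y+\alpha_i XY+\alpha_i^{-1}=0$ splits into the two lines $(X+\alpha_i^{-1})(Y+\alpha_i^{-1})=0$. Your additional bookkeeping of the scalar $\alpha_1\alpha_2\alpha_3=a_3$ and the explicit appeal to Lemma \ref{lm:cubes} for the $\fqs$-rationality of the $\alpha_i$ merely spell out details the paper leaves implicit.
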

	\begin{proof}
		The proof is a consequence of Corollary \ref{cor:decD} and $u=X+Y$, $ v=XY $. As a matter of fact, the quadric \[X+Y+\alpha_i XY + \alpha_i^{-1}=0\] splits as \[(X+\alpha_i^{-1})(Y+\alpha_i^{-1})=0\] for every $i=1,2,3$.
	\end{proof}
	
	\begin{corollary}\label{cr:a3noncube}
		Let $a_3$ be a cube in $\mu_{q+1}$. Then the set $\cC \cap \mu_{q+1}^2$ is non-empty and $f$ is not a PP of $\fqs$.
	\end{corollary}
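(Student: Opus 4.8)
The plan is to read the conclusion directly off the complete factorization produced in Proposition \ref{pr:Cfact}. Under the standing hypothesis that $a_3$ is a cube in $\mu_{q+1}$, that proposition already tells us that $\cC$ splits into the six lines
\[
\cC\colon \Pi_{i=1}^3 (X+\alpha_i^{-1})(Y+\alpha_i^{-1})=0,
\]
where $\alpha_1,\alpha_2,\alpha_3$ are the cube roots of $a_3$. So $\cC$ is a union of three ``vertical'' lines $X=\alpha_i^{-1}$ and three ``horizontal'' lines $Y=\alpha_i^{-1}$, and to produce a point of $\cC\cap\mu_{q+1}^2$ off the diagonal it suffices to locate a single one of these lines whose defining constant lies in $\mu_{q+1}$.

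First I would use the hypothesis itself: the statement $a_3\in\{x^3\mid x\in\mu_{q+1}\}$ means precisely that $a_3=\alpha^3$ for some $\alpha\in\mu_{q+1}$, so after relabelling we may assume $\alpha_1\in\mu_{q+1}$. Because $\mu_{q+1}$ is a group we also have $\alpha_1^{-1}\in\mu_{q+1}$, and hence the component $X+\alpha_1^{-1}=0$ of $\cC$ has its $X$-coordinate already in $\mu_{q+1}$. Next I would choose any $Y\in\mu_{q+1}$ with $Y\neq\alpha_1^{-1}$, which is possible since $|\mu_{q+1}|=q+1\geq 2$. The resulting point $(\alpha_1^{-1},Y)$ lies on the component $X+\alpha_1^{-1}=0$, hence on $\cC$; it lies in $\mu_{q+1}^2$; and it is off the diagonal $X=Y$.

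Finally I would invoke the criterion recorded in Section~\ref{sec:4}'s setup, namely that $p(x)$ permutes $\mu_{q+1}$ if and only if $\cC\cap\mu_{q+1}^2$ contains no point with $X\neq Y$. The off-diagonal point just exhibited shows that $\cC\cap\mu_{q+1}^2$ is non-empty and that $p$ fails to be injective on $\mu_{q+1}$; combined with the equivalence that $f(x)=xh(x^{q-1})$ is a PP of $\fqs$ exactly when $g(x)=xh(x)^{q-1}$ permutes $\mu_{q+1}$, this forces $f$ not to be a PP of $\fqs$. I do not expect a genuine obstacle here, since all the real work has been carried out in Proposition \ref{pr:Cfact}; the only two points deserving a line of care are that at least one cube root of $a_3$ actually lands in $\mu_{q+1}$ (immediate from the hypothesis) and that an off-diagonal partner $Y$ exists (immediate from $q+1\geq 2$). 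If one wishes the argument to be symmetric, the three horizontal lines $Y=\alpha_i^{-1}$ yield the same conclusion, and when $3\mid q+1$ all three constants $\alpha_i^{-1}$ lie in $\mu_{q+1}$, so in fact many such witnessing points appear.
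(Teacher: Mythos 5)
Your proof is correct and takes essentially the same route as the paper: both read the conclusion directly off the factorization of $\cC$ into lines from Proposition \ref{pr:Cfact}, use the hypothesis to place a cube root $\alpha_i$ (hence $\alpha_i^{-1}$) in $\mu_{q+1}$, and conclude via the off-diagonal point criterion that $f$ is not a PP. If anything you are slightly more careful than the paper's one-line proof, which asserts that all $\alpha_i\in\mu_{q+1}$ (guaranteed only when $3\mid q+1$), whereas you correctly note that a single such root, together with any $Y\in\mu_{q+1}\setminus\{\alpha_1^{-1}\}$, already suffices.
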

	\begin{proof}
		The claim follows since $a_3 $ is a cube in $ \mu_{q+1}$ (and hence $\alpha_i \in \mu_{q+1}$).
	\end{proof}

	\section{$ \theta_2 \ne 0 $ and $\theta_4=0$}\label{sec:5}

	Now we suppose that $\theta_2 \ne 0$ and $\theta_4=0$. Recall that in this case \begin{equation}\label{eq:t1+t2=0}
		\theta_2^{q+1}+\theta_3^{q+1}=0.
	\end{equation}
	The equation of $\cC$ becomes
	\begin{equation}\label{eq:C2}
		\begin{aligned}
			\mathcal{C}\colon &	\theta_3^q + \theta_3 X^3 Y^3 + 
			\theta_1(X + Y)^3 
			+\theta_2 (X Y  + (X + Y)^2) + \theta_2^q (X^2 Y^2  + X Y (X + Y)^2)=0,
		\end{aligned}
	\end{equation} 
	while $\cD$ has equation
	\begin{equation*}
		\cD \colon \theta_3^q + \theta_1 u^3 + \theta_3 v^3 + \theta_2 (u^2 + v) + 
		\theta_2^q v (u^2 + v) =0.
	\end{equation*}
	Similarly to the first case, we want to understand the relation between the irreducibility of $\cD$ (and so of $\cC$).
	\begin{proposition}
		$\cC$ is absolutely irreducible if and only if $\cD$ is absolutely irreducible
	\end{proposition}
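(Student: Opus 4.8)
The plan is to prove the two implications separately; the forward one is immediate, while the converse rests on a short coefficient comparison that carries the real content.

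For the implication "$\cC$ absolutely irreducible $\Rightarrow$ $\cD$ absolutely irreducible" I would argue by contraposition. If $\cD$ is reducible over $\overline{\mathbb{F}}_q$, write $G(u,v)=G_1(u,v)G_2(u,v)$ with both factors nonconstant. Substituting $u=X+Y$, $v=XY$ yields $F(X,Y)=G_1(X+Y,XY)\,G_2(X+Y,XY)$. Since the morphism $(X,Y)\mapsto(X+Y,XY)$ is surjective onto $\mathbb{A}^2(\overline{\mathbb{F}}_q)$, the pull-back of a nonconstant polynomial stays nonconstant, so this is a genuine factorization of $F$ and $\cC$ is reducible. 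Hence $\cC$ absolutely irreducible forces $\cD$ absolutely irreducible.

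For the converse I would invoke Remark \ref{rk:cubic}: since $\mathfrak{G}=\langle(X,Y)\mapsto(Y,X)\rangle$ acts on $\cC$ with quotient $\cD$, whenever $\cD$ is absolutely irreducible the only way $\cC$ can fail to be is that it splits into two cubics exchanged by $\mathfrak{G}$. So I would assume, toward a contradiction, that $F=L_1(X,Y)\,L_2(X,Y)$ with $L_2(X,Y)=L_1(Y,X)$ and both factors genuine cubics, and decompose $L_1=C_1+Q_1+\ell_1+a_{00}$ into its homogeneous parts. The degree-$6$ part of $F$ in \eqref{eq:C2} is $\theta_3X^3Y^3$, so $C_1(X,Y)\,C_1(Y,X)=\theta_3X^3Y^3$; as this sextic has only $X$ and $Y$ as linear factors, $C_1$ must be a monomial $cX^iY^{3-i}$ with $c\neq0$ and $c^2=\theta_3$. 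Note that $\theta_3\neq0$ here, because \eqref{eq:t1+t2=0} gives $\theta_3^{q+1}=\theta_2^{q+1}\neq0$. Exchanging $L_1$ and $L_2$ if necessary, it suffices to treat $i=0$ and $i=1$.

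The crux, and the step I expect to be the main obstacle, is the comparison of the remaining coefficients. Since $F$ has no degree-$5$ part, matching that homogeneous piece forces $Q_1=0$ when $i=0$, and $Q_1=\beta\,Y(X+Y)$ (hence $Q_1(Y,X)=\beta\,X(X+Y)$) when $i=1$. Substituting back, one checks that the coefficient of $X^2Y^2$ in $L_1L_2$ vanishes in both cases: for $i=0$ no such term is produced, while for $i=1$ the two contributions $cq\,X^2Y^2$ arising from $C_1(X,Y)\ell_1(Y,X)$ and $\ell_1(X,Y)C_1(Y,X)$ cancel in characteristic two. But in \eqref{eq:C2} the coefficient of $X^2Y^2$ equals $\theta_2^{\,q}\neq0$, since $\theta_2\neq0$. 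This contradiction excludes the splitting into two cubics, so by Remark \ref{rk:cubic} the absolute irreducibility of $\cD$ forces that of $\cC$, completing the equivalence.
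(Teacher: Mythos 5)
Your argument is correct and follows essentially the same route as the paper's own proof: both reduce the statement, via Remark \ref{rk:cubic}, to excluding a factorization of $\cC$ into two cubics swapped by $\mathfrak{G}$, and both exclude it by a coefficient comparison that ends in a contradiction with $\theta_2 \neq 0$ (the paper matches the constant, $X$, and $XY$ coefficients of the generic two-cubic product, getting $\theta_3^q=a_{00}^2$, $a_{00}a_{01}+a_{00}a_{10}=0$, $\theta_2=a_{01}^2+a_{10}^2$, while you work top-down through the degree-$6$, degree-$5$, and $X^2Y^2$ coefficients). Two cosmetic points: ``$cq$'' in your $i=1$ case should read $cs$ (writing $\ell_1=rX+sY$), and you should also record there that $Q_1(X,Y)Q_1(Y,X)=\beta^2XY(X+Y)^2=\beta^2(X^3Y+XY^3)$ contributes no $X^2Y^2$ term in characteristic two, so that the cancellation claim is complete.
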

	\begin{proof}
		As we have already pointed out, the only case to be checked is when $\cC$ is the product of two cubics, which form an orbit under $\mathfrak{G}$.
		The union of two such cubics has equation $F'(X,Y)=0$, where $F'(X,Y)$ is defined as
		\begin{equation}
			\begin{aligned}
				&(a_{00} + a_{10}X + a_{20}X^2 + 
				a_{30}X^3 + a_{01}Y + a_{11}XY + 
				a_{21}X^2Y + a_{02}Y^2 + 
				a_{12}XY^2 + a_{03}Y^3)\\
				&(a_{00} + a_{01}X + a_{02}X^2 + 
				a_{03}X^3 + a_{10}Y + a_{11}XY + 
				a_{12}X^2Y + a_{20}Y^2 + 
				a_{21}XY^2 + a_{30}Y^3)=0.	
			\end{aligned}
		\end{equation} 
		By straightforward computations, we obtain 
		\begin{equation*}
			\begin{cases}
				\begin{aligned}
					&	\theta_3^q = a_{00}^2 \\
					&	 \theta_2=a_{01}^2+a_{10}^2\\
					&	 a_{00} a_{01} + a_{00} a_{10}=0
				\end{aligned}
			\end{cases}
		\end{equation*}
		Since $\theta_3^{q+1}=\theta_2^{q+1} \ne 0$, this implies $a_{00} \ne 0$ and hence $a_{10}=a_{01}$, which contradicts the assumption $\theta_2 \ne 0$.
	\end{proof}
	
	The next propositions allow us to obtain information about the factorization of $\cD$ (and so $ \cC $).
	
	\begin{proposition}\label{pr:t1=0}
		Let $\theta_1 =0$. The followings hold:
		\begin{itemize}
			\item[1.] if $\theta_3 = \theta_2^{2q-1}$ then the curve $\cD$  splits as \[\cD \colon (\theta_2+\theta_2^q v)(\theta_2^{1-q}+u^2+\theta_2^{q-1}v^2)=0;\]
			\item[2.] if $\theta_3 \ne \theta_2^{2q-1}$ then the curve $\cD$ has exactly one singular point $P=(0,\alpha)$, where $\alpha$ is the (unique) solution of $\alpha^2=\frac{\theta_2}{\theta_3}$.
		\end{itemize}
	\end{proposition}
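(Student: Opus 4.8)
The plan is to treat the two parts separately, both by direct computation resting on the characteristic-two Frobenius identity $\theta_2^{q^2}=\theta_2$ and on the standing hypothesis $\theta_4=0$, which by \eqref{eq:t1+t2=0} reads $\theta_3^{q+1}=\theta_2^{q+1}$. Throughout I set $\theta_1=0$, so that $G(u,v)=\theta_3^q+\theta_3 v^3+\theta_2 u^2+\theta_2 v+\theta_2^q v u^2+\theta_2^q v^2$. For part~1, I would simply expand the proposed product $(\theta_2+\theta_2^q v)(\theta_2^{1-q}+u^2+\theta_2^{q-1}v^2)$ and match it term by term against $G$. All coefficients agree trivially except two: the coefficient of $v^3$ forces $\theta_3=\theta_2^{2q-1}$, and the constant term forces $\theta_3^q=\theta_2^{2-q}$, which is the same condition after raising to the $q$-th power and using $\theta_2^{q^2}=\theta_2$. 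Hence under $\theta_3=\theta_2^{2q-1}$ the factorization holds.

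For part~2, the decisive feature of characteristic two is that $u$ occurs in $G$ only through the even-degree terms $\theta_2 u^2$ and $\theta_2^q v u^2$, so $\partial_u G\equiv 0$; a point is therefore singular exactly when $G=0$ and $\partial_v G=\theta_3 v^2+\theta_2+\theta_2^q u^2=0$. I would first verify that $P=(0,\alpha)$ with $\alpha^2=\theta_2/\theta_3$ satisfies both. The relation $\partial_v G(P)=0$ is immediate from $\theta_3\alpha^2=\theta_2$, while $G(P)=\theta_3^q+\theta_3\alpha^3+\theta_2\alpha+\theta_2^q\alpha^2=\theta_3^q+\theta_2^q\alpha^2$ collapses (using $\theta_3\alpha^3=\theta_2\alpha$) to $\theta_3^q+\theta_2^{q+1}/\theta_3$, which vanishes precisely because $\theta_3^{q+1}=\theta_2^{q+1}$, i.e. because $\theta_4=0$. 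Here $\theta_3\neq 0$ since $\theta_3^{q+1}=\theta_2^{q+1}\neq 0$, and $\alpha$ is unique because Frobenius is a bijection.

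To prove that $P$ is the only singular point, I would use $\partial_v G=0$ to solve $\theta_2^q u^2=\theta_3 v^2+\theta_2$ and substitute into $G$, collecting the $u$-terms as $(\theta_2+\theta_2^q v)u^2$. After the characteristic-two cancellations the cubic collapses to the single relation $(\theta_3^q+\theta_2^{2-q})+(\theta_2^q+\theta_2^{1-q}\theta_3)v^2=0$. Since $\theta_2^q+\theta_2^{1-q}\theta_3=\theta_2^{1-q}(\theta_2^{2q-1}+\theta_3)$ is nonzero exactly because $\theta_3\neq\theta_2^{2q-1}$, this determines $v^2$ uniquely; cross-multiplying and invoking $\theta_3^{q+1}=\theta_2^{q+1}$ shows $v^2=\theta_2/\theta_3=\alpha^2$, whence $v=\alpha$ and then $u^2=(\theta_3\alpha^2+\theta_2)/\theta_2^q=0$. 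So $P=(0,\alpha)$ is the unique affine singular point.

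The step I expect to be the genuine obstacle, or at least the one most easily overlooked, is ruling out singular points at infinity, since $\cD$ is a cubic and meets the line at infinity. Homogenizing to $\tilde G(u,v,w)=\theta_3^q w^3+\theta_3 v^3+\theta_2 u^2 w+\theta_2 v w^2+\theta_2^q v u^2+\theta_2^q v^2 w$, the intersection with $w=0$ is $v(\theta_3 v^2+\theta_2^q u^2)=0$, giving $(1:0:0)$ and $(1:\beta:0)$ with $\beta^2=\theta_2^q/\theta_3$. At $(1:0:0)$ one has $\partial_v\tilde G=\theta_2^q\neq 0$, and at $(1:\beta:0)$ one computes $\partial_w\tilde G=\theta_2+\theta_2^q\beta^2=\theta_2+\theta_2^{2q}/\theta_3$, which vanishes exactly when $\theta_3=\theta_2^{2q-1}$. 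Thus under the hypothesis $\theta_3\neq\theta_2^{2q-1}$ neither point at infinity is singular, and $P$ is the only singular point of $\cD$. It is reassuring that the very condition $\theta_3=\theta_2^{2q-1}$ producing the splitting in part~1 is also the one that would create an extra singularity at infinity in part~2.
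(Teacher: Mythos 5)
Your proof is correct, and its affine core is essentially the paper's own argument: in characteristic two $\partial_u G$ vanishes identically, so one solves $\partial_v G=0$ for $u^2$, substitutes into $G$, and the relation $\theta_3^{q+1}=\theta_2^{q+1}$ (i.e.\ $\theta_4=0$) collapses the result to $v^2=\theta_2/\theta_3$ and $u=0$; part~1 is likewise the paper's factorization, verified by expansion exactly as the paper does (the paper additionally observes that the second factor is $\theta_2^{-q}\partial_v G$). Where you go beyond the paper is in two completeness checks. First, you verify that $P=(0,\alpha)$ actually lies on $\cD$ (the paper's elimination shows $P$ is the only \emph{candidate} singular point; that it is genuinely singular is left implicit, though it follows by reversing the substitutions). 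Second, and more substantially, you examine the line at infinity: the paper's proof is purely affine, so your computation that the two points at infinity $(1:0:0)$ and $(1:\beta:0)$, $\beta^2=\theta_2^q/\theta_3$, are nonsingular precisely when $\theta_3\neq\theta_2^{2q-1}$ is a genuine addition. This matters not just for the literal claim ``exactly one singular point,'' but for the downstream use in Proposition~\ref{pr:DabsIrr}: the inference ``unique singular point which is an ordinary node $\Rightarrow$ the cubic is absolutely irreducible'' needs Bezout in the projective plane, hence needs to know no singularities hide at infinity. Your closing remark—that the excluded condition $\theta_3=\theta_2^{2q-1}$ is exactly the one producing a singular point at infinity—ties the two parts together nicely and is consistent with the splitting in part~1.
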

	\begin{proof}
		The equation of $\cD$ becomes
		\[G(u,v)=u^2 v \theta_2^q+v^2 \theta_2^q+\theta_3^q+\theta_2 u^2+\theta_2 v+\theta_3 v^3=0\]
		and the partial derivatives system is made by the single equation
		\begin{equation*}
			\frac{\partial G}{\partial v}=\theta_2 + \theta_2^q u^2 + \theta_3 v^2=0
		\end{equation*}
		which implies \[u^2=\frac{\theta_3 v^2+\theta_2}{\theta_2^q}.\]
		Going back to the equation of $\cD$, we obtain
		\begin{equation}\label{eq:fvv}
			v^2 \theta_2^{2 q}+\theta_2^q \theta_3^q+\theta_2^2+\theta_2 \theta_3 v^2=0.
		\end{equation}
		Therefore, if $\theta_3 \ne \theta_2^{2q-1}$, equation \eqref{eq:fvv}, together with equation \eqref{eq:t1+t2=0}, implies \[v^2=\frac{\theta_2^{q-1} \theta_3^q+\theta_2}{\theta_2^{2 q-1}+\theta_3}=\frac{\theta_2}{\theta_3}\]
		which means that $u=0$ and $\cD$ has only one singular double point $P=(0,\alpha)$, where $\alpha^2=\frac{\theta_2}{\theta_3}$.
		
		On the other hand, if $\theta_3=\theta_2^{2q-1}$, the equation of $\cD$ becomes:
		\begin{equation}\label{eq:Dt3=t22q-1}
			v^3 \theta_2^{2 q-1}+\theta_2^{2-q}+v \theta_2^q \left(u^2+v\right)+\theta_2 \left(u^2+v\right)=0
		\end{equation}
		Note that the resultant between $h(x)$ and the derivative with respect to $v$ is $0$. This means that they share a common factor. Indeed, we have the following factorization for \eqref{eq:Dt3=t22q-1}:
		\begin{equation*}\label{key}
			\begin{aligned}
				v^3 \theta_2^{2 q-1}+\theta_2^{2-q}+v \theta_2^q \left(u^2+v\right)+\theta_2 \left(u^2+v\right)&=\\
				(\theta_2+\theta_2^q v)(\theta_2^{1-q}+u^2+\theta_2^{q-1}v^2)&=0
			\end{aligned}
		\end{equation*}
		where the second factor equals $\theta_2^{-q}\frac{\partial G}{\partial v}$.
	\end{proof}
	\begin{proposition}\label{pr:t1ne0}
		Let $\theta_1 \ne 0$. The curve $\cD$ has exactly one singular point $P=(0,\alpha)$, where $\alpha$ is the (unique) solution of $\alpha^2=\frac{\theta_2}{\theta_3}$.
	\end{proposition}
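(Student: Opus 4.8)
The plan is to locate the singular locus of $\cD$ directly from the partial-derivative system, just as in the proof of Proposition~\ref{pr:t1=0}, the decisive difference being that the hypothesis $\theta_1\ne 0$ now makes the $u$-derivative nontrivial and pins down the $u$-coordinate at once. Expanding
\[
G(u,v)=\theta_3^q + \theta_1 u^3 + \theta_3 v^3 + \theta_2 u^2 + \theta_2 v + \theta_2^q u^2 v + \theta_2^q v^2
\]
and working in characteristic two (so that all even-degree contributions drop out), I would first record
\[
\partial_u G=\theta_1 u^2,\qquad \partial_v G=\theta_3 v^2 + \theta_2^q u^2 + \theta_2 .
\]
Since $\theta_1\ne 0$, the equation $\partial_u G=0$ forces $u^2=0$, hence $u=0$; this is the one place where the standing assumption is used, and it is exactly what fails when $\theta_1=0$.

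Substituting $u=0$ into $\partial_v G=0$ leaves $\theta_3 v^2=\theta_2$. Here I would note that $\theta_3\ne 0$: from $\theta_4=0$ and the identity $\theta_2^{q+1}+\theta_3^{q+1}=\theta_4\theta_4'$ one gets $\theta_3^{q+1}=\theta_2^{q+1}\ne 0$. Thus $v^2=\theta_2/\theta_3$, and since $x\mapsto x^2$ is a bijection of $\fqs$ there is a unique root $\alpha$. At this stage the singular locus is contained in the single candidate $P=(0,\alpha)$, so uniqueness will be automatic once we know $P$ actually lies on $\cD$.

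The only genuinely non-formal step is therefore to verify $G(0,\alpha)=0$. I would substitute into $G(0,v)=\theta_3^q+\theta_3 v^3+\theta_2 v+\theta_2^q v^2$, use $\alpha^2=\theta_2/\theta_3$ to rewrite $\theta_3\alpha^3=\theta_2\alpha$ and $\theta_2^q\alpha^2=\theta_2^{q+1}/\theta_3$, observe that the two resulting $\theta_2\alpha$ terms cancel in characteristic two, and finally invoke $\theta_2^{q+1}=\theta_3^{q+1}$ to reduce the expression to $\theta_3^q+\theta_3^q=0$. This identity check is the main (and essentially the sole) obstacle: the determination of $u$ and $v$ is immediate, whereas the vanishing at $(0,\alpha)$ is not automatic and relies crucially on the relation $\theta_3^{q+1}=\theta_2^{q+1}$ coming from $\theta_4=0$. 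Once it is in place, $P=(0,\alpha)$ is confirmed to be the unique singular point of $\cD$, completing the proof.
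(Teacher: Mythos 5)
Your proposal is correct and follows essentially the same route as the paper: compute $\partial_u G=\theta_1 u^2$ and $\partial_v G=\theta_2+\theta_2^q u^2+\theta_3 v^2$, use $\theta_1\ne 0$ to force $u=0$, and solve $\theta_3 v^2=\theta_2$ for the unique $\alpha$. In fact you are slightly more thorough than the paper, which omits both the remark that $\theta_3\ne 0$ (via $\theta_3^{q+1}=\theta_2^{q+1}$ from $\theta_4=0$) and the verification $G(0,\alpha)=0$ confirming that the candidate point actually lies on $\cD$.
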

	\begin{proof}
		The system of partial derivatives is 
		\begin{equation}
			\begin{cases}
				\begin{aligned}
					&\theta_1 u^2=0\\
					&\theta_2 + \theta_2^q u^2 + \theta_3 v^2=0
				\end{aligned}
			\end{cases}
		\end{equation}
		This means that there is only one singular point $P=(0,\alpha)$ where $\alpha^2 = \frac{\theta_2}{\theta_3}$.
	\end{proof}
	Propositions \ref{pr:t1=0} and \ref{pr:t1ne0} lead us to study what kind of singular point $P=(0,\alpha)$ is. We can treat both cases together. 
	Applying a birational transformation which sends $P$ to the origin, namely
	\[\Phi: (u,v) \mapsto (U,V+\alpha),\]
	the equation for $\Phi(\cD)$ is 
	\begin{equation}\label{eq:phiC}
		(\theta_2 + \alpha \theta_2^q) U^2 + (\theta_2^q + \alpha \theta_3) V^2  + \theta_1 U^3 + \theta_2^q U^2 V +\theta_3 V^3=0.
	\end{equation}
	\begin{proposition}\label{pr:DabsIrr}
		The curve $\cD$ is absolutely irreducible if and only if $\theta_3 \ne \theta_2^{2q-1}$.
	\end{proposition}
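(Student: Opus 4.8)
The plan is to treat $\cD$ as a plane cubic and to read off its reducibility from the type of the unique singular point $P=(0,\alpha)$ furnished by Propositions \ref{pr:t1=0} and \ref{pr:t1ne0}. The guiding principle is the genus inequality for plane curves: a cubic has arithmetic genus $1$, so an absolutely irreducible cubic has at most one singular point and there $\sum_P\delta_P\le 1$; such a point must therefore be an ordinary node or cusp ($\delta_P=1$). Conversely a reducible cubic is a line plus a conic or a union of three lines, and one checks that the only configuration with a single isolated double point is ``a line tangent to an irreducible conic'', which yields $\delta_P=2$. Thus $\cD$ is absolutely irreducible exactly when the singularity at $P$ has $\delta_P=1$, and the whole task is to detect this from the normal form \eqref{eq:phiC}.

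I would first dispatch the direction $\theta_3=\theta_2^{2q-1}\Rightarrow$ reducible. Writing $a=\theta_2+\alpha\theta_2^q$ and $b=\theta_2^q+\alpha\theta_3$ for the coefficients of the quadratic part of \eqref{eq:phiC} and using $\alpha^2=\theta_2/\theta_3$, a short computation gives
\[ a^2=\frac{\theta_2^2}{\theta_3}\bigl(\theta_3+\theta_2^{2q-1}\bigr),\qquad b^2=\theta_2\bigl(\theta_3+\theta_2^{2q-1}\bigr). \]
Hence $\theta_3=\theta_2^{2q-1}$ forces $a=b=0$, so \eqref{eq:phiC} collapses to the homogeneous cubic $\theta_1U^3+\theta_2^qU^2V+\theta_3V^3$, which splits into three linear forms over $\overline{\F}_q$. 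Therefore $\cD$ is a union of three concurrent lines ($P$ a triple point), in particular reducible; for $\theta_1=0$ this recovers the splitting of Proposition \ref{pr:t1=0}(1).

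For the converse, assume $\theta_3\ne\theta_2^{2q-1}$, so by the displayed identities $a,b\ne 0$. In characteristic two $aU^2+bV^2=(\sqrt a\,U+\sqrt b\,V)^2$, so the tangent cone at $P$ is the double line $\ell^2$ with $\ell=\sqrt a\,U+\sqrt b\,V$, and $P$ is a genuine double point. I would then use the elementary fact that a cubic $\ell^2+Q_3$ with a unique double point is reducible iff its tangent line divides the cubic part: if $\ell\mid Q_3$ then $\cD=\ell\,(\ell+q_2)$ is a line plus a conic, whereas if $\ell\nmid Q_3$ no linear factor can occur (a factor through $P$ would force $\ell\mid Q_3$, and one off $P$ would create a second singular point, contradicting Propositions \ref{pr:t1=0}--\ref{pr:t1ne0}). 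From $\alpha^2\theta_3=\theta_2$ one obtains the clean relation $a=\alpha b$, so the tangent direction is $(U,V)=(1,\beta)$ with $\beta^2=\alpha$, i.e. $\beta^4=\theta_2/\theta_3$. Evaluating,
\[ Q_3(1,\beta)=\theta_1+\theta_2^q\beta+\theta_3\beta^3,\qquad \beta\,Q_3(1,\beta)=\theta_2^q\beta^2+\theta_1\beta+\theta_2, \]
so $\ell\mid Q_3$ is equivalent to the scalar equation $\theta_2^q\beta^2+\theta_1\beta+\theta_2=0$.

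The heart of the proof, and the step I expect to be the main obstacle, is to show that this last equation is incompatible with $\theta_3\ne\theta_2^{2q-1}$ under the standing hypotheses $\theta_2\ne0$, $\theta_4=0$; equivalently, that the singularity at $P$ is always an ordinary cusp and never a tacnode. The input $\theta_4=0$ gives $\theta_2^{q+1}=\theta_3^{q+1}$, whence $\theta_2/\theta_3\in\mu_{q+1}$ and, since $q+1$ is odd and $\beta\in\fqs$, its fourth root $\beta$ again lies in $\mu_{q+1}$; together with $\theta_1\in\fq$ this lets one apply the Frobenius $x\mapsto x^q$ to $\theta_2^q\beta^2+\theta_1\beta+\theta_2=0$ and combine it with its conjugate. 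When $\theta_1=0$ the equation reduces to $\beta^2=\theta_2^{1-q}$, i.e. $\alpha=\theta_2^{1-q}$, which squares to $\theta_3=\theta_2^{2q-1}$ and is excluded outright; the remaining work is to treat $\theta_1\ne 0$ by substituting the defining expressions $\theta_1=1+a_2^{q+1}$, $\theta_2=a_1^q+a_3a_2^q$, $\theta_3=a_3+a_2a_1^q$ (with $a_1^{q+1}=a_3^{q+1}$) and deducing $\theta_3=\theta_2^{2q-1}$. Establishing this incompatibility cleanly is the crux on which absolute irreducibility rests.
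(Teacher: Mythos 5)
Your reduction is correct and complete up to the final step, and it is in fact \emph{more} careful than the paper's own proof: in characteristic two the quadratic part of \eqref{eq:phiC} is the perfect square $(\sqrt{a}\,U+\sqrt{b}\,V)^2$, so the singular point is never an ordinary double point, and (via your identity $a=\alpha b$) reducibility of $\cD$ is equivalent to the tangent line dividing the cubic part, i.e.\ to the scalar equation $\theta_2^q\beta^2+\theta_1\beta+\theta_2=0$, where $\beta^2=\alpha$. By contrast, the paper factors the quadratic part as $(U+V)\bigl(U+\tfrac{\theta_2^q+\alpha\theta_3}{\theta_2+\alpha\theta_2^q}V\bigr)$, which has a spurious $UV$ term absent from \eqref{eq:phiC}, and on that basis declares $P$ an ordinary double point; that step is simply false in characteristic two. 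The genuine gap in your proposal is exactly the step you defer as ``the crux'': for $\theta_1\ne 0$ you never prove that $\theta_2^q\beta^2+\theta_1\beta+\theta_2=0$ is incompatible with $\theta_3\ne\theta_2^{2q-1}$, you only sketch how one might try. Since the whole ``if'' direction rests on it, the proof is incomplete.

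Worse, that gap cannot be closed, because the claimed incompatibility is false, and with it the proposition as stated. Let $m$ be odd, so $3\mid q+1$, and let $\omega\in\fqs$ satisfy $\omega^2+\omega+1=0$, so $\omega\in\mu_{q+1}$. Take $(a_1,a_2,a_3)=(1,0,\omega^2)$. Then $\theta_4=1+\omega^{2(q+1)}=0$, $\theta_2=1\ne 0$, $\theta_1=1\ne 0$, and $\theta_3=\omega^2\ne 1=\theta_2^{2q-1}$; moreover $\alpha=\omega^2$, $\beta=\omega$, and your criterion holds: $\theta_2^q\beta^2+\theta_1\beta+\theta_2=\omega^2+\omega+1=0$. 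Concretely, since $\theta_3^q=\omega$,
\begin{equation*}
G(u,v)=\omega+u^3+\omega^2v^3+u^2+v+u^2v+v^2=\bigl(v+\omega u+\omega^2\bigr)\bigl(\omega^2v^2+(u+\omega^2)v+\omega^2u^2+\omega u+\omega^2\bigr),
\end{equation*}
so $\cD$ is a line plus a conic although $\theta_3\ne\theta_2^{2q-1}$. Pulling back by $u=X+Y$, $v=XY$, the linear factor becomes $(X+\omega)(Y+\omega)$, so $\cC$ contains the lines $X=\omega$ and $Y=\omega$, which meet $\mu_{q+1}^2$ in points off the diagonal; hence $f$ is still not a PP in this example, but that conclusion must be reached by exhibiting these components and their $\mu_{q+1}$-points, not through absolute irreducibility. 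In short: your route is the right one and exposes a real error in the paper, but the statement itself needs the extra hypothesis $\theta_2^q\beta^2+\theta_1\beta+\theta_2\ne 0$ (equivalently $\theta_1\ne\theta_2^q\beta+\theta_2\beta^{-1}$); as stated it admits counterexamples, so no completion of your final step is possible.
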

	\begin{proof}
		The only case in which $\cD$ is absolutely irreducible is when the origin $O$ is an ordinary double point of $\Phi(\cD)$. However, when $\theta_3 = \theta_2^{2q-1}$ the equation becomes  
		\[\theta_1 U^3 + \theta_2^q U^2 V +\theta_3 V^3=0\]
		and $O$ is a triple point.
		On the other hand, when $\theta_3 \ne  \theta_2^{2q-1}$ the equation is 
		\[(U+V)(V \frac{\theta_2^q + \alpha \theta_3}{\theta_2 + \alpha \theta_2^q}+U)+\theta_1 U^3 + \theta_2^q U^2 V +\theta_3 V^3=0\]
		and $P$ is an ordinary double point. 
	\end{proof}
	
	\begin{corollary}
		Let $\theta_2 \ne 0 $ and $\theta_4=0$. If $\theta_1=0$ then $\cC \cap \mu_{q+1}^2$ is non-empty and disjoint from the line $X=Y$, whereas if $\theta_1 \ne 0$ and $\theta_2^{2q-1} \ne \theta_3$ then $\cC$ is absolutely irreducible over $\fqs$ (over $\fq$) if $\theta_2 \in \fqs \setminus \fq$ ($\theta_2 \in \fq$).
	\end{corollary}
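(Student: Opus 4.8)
The plan is to treat the two bulleted cases separately, splitting the first one further according to whether $\theta_3=\theta_2^{2q-1}$. In each situation I either exhibit a point of $\cC\cap\mu_{q+1}^2$ lying off the diagonal $X=Y$ --- which by the criterion of \cite[Theorem 1.2]{Zieve_mq+1} recalled above means $p$ is not injective on $\mu_{q+1}$, so $f$ is not a PP --- or I prove $\cC$ absolutely irreducible and quote Proposition \ref{pr:A-P}.

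Assume first $\theta_1=0$. If $\theta_3=\theta_2^{2q-1}$, part~1 of Proposition \ref{pr:t1=0} shows that $\cD$ has the linear factor $\theta_2+\theta_2^q v=\theta_2^q(v+\theta_2^{1-q})$. Replacing $(u,v)$ by $(X+Y,XY)$ turns this into the factor $XY+\theta_2^{1-q}$ of $F(X,Y)$, so the conic $XY=\theta_2^{1-q}$ is a component of $\cC$. Put $c=\theta_2^{1-q}$; since $\theta_2^{q^2-1}=1$ one has $c^{q+1}=\theta_2^{1-q^2}=1$, i.e.\ $c\in\mu_{q+1}$. Therefore, for each $X\in\mu_{q+1}$ the pair $(X,cX^{-1})$ lies in $\cC\cap\mu_{q+1}^2$, and it is off the diagonal unless $X^2=c$; as $q+1$ is odd, squaring is a bijection of $\mu_{q+1}$, so only one value of $X$ is lost and off-diagonal points survive. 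If instead $\theta_3\ne\theta_2^{2q-1}$, Proposition \ref{pr:DabsIrr} makes $\cD$ absolutely irreducible, and the proposition equating the absolute irreducibility of $\cC$ with that of $\cD$ makes $\cC$ absolutely irreducible; Proposition \ref{pr:A-P} (applicable since $q\ge 512$) then produces the required off-diagonal point.

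Now assume $\theta_1\ne 0$ and $\theta_3\ne\theta_2^{2q-1}$. The same two results --- Proposition \ref{pr:DabsIrr} and the equivalence between the irreducibility of $\cC$ and that of $\cD$ --- give at once that $\cC$ is absolutely irreducible. The assertion about the field of definition is then read off from \eqref{eq:C2}: the coefficient $\theta_1=1+a_1^{q+1}+a_2^{q+1}+a_3^{q+1}$ of $(X+Y)^3$ is a sum of norms, hence lies in $\fq$ for every choice of the $a_i$ (as does $\theta_4$), so that, using $\theta_4=0$ to tie $\theta_3$ to $\theta_2$, the field of definition is controlled by $\theta_2$, yielding the stated dichotomy $\theta_2\in\fq$ versus $\theta_2\in\fqs\setminus\fq$.

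The main obstacle is the split subcase $\theta_1=0$, $\theta_3=\theta_2^{2q-1}$: there $\cC$ is reducible, so Hasse--Weil gives no information and an explicit witness is unavoidable. The delicate points are verifying $c=\theta_2^{1-q}\in\mu_{q+1}$ and checking that the single diagonal solution $X^2=c$ does not exhaust $\cC\cap\mu_{q+1}^2$, so that genuine off-diagonal points remain for every admissible $q$. By contrast, the field-of-definition bookkeeping in the last case is routine once the coefficients of \eqref{eq:C2} have been inspected.
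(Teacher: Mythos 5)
Your proposal is correct and follows essentially the same route as the paper: the same case split ($\theta_1=0$ with $\theta_3=\theta_2^{2q-1}$ handled via the factorization from Proposition \ref{pr:t1=0} and an explicit off-diagonal point on the conic $XY=\theta_2^{1-q}$; the remaining cases via Proposition \ref{pr:DabsIrr}, the equivalence of irreducibility of $\cC$ and $\cD$, and the Hasse--Weil argument of Proposition \ref{pr:A-P}). If anything, your treatment of the diagonal exclusion is slightly more careful than the paper's, which excludes $\alpha=\theta_2^{1-q}$ rather than the unique square root of $\theta_2^{1-q}$ in $\mu_{q+1}$.
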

	\begin{proof}
		The proof is obtained summing up previous propositions. More precisely, if $\theta_1=0$ and $\theta_2^{2q-1}=\theta_3$, from Proposition \ref{pr:t1=0}, we have \[\cC\colon(\theta_2+\theta_2^q XY)(\theta_2^{1-q}+(X+Y)^2+\theta_2^{q-1}X^2Y^2)=0.\]
		Let $\alpha \in \mu_{q+1} \setminus \{\frac{1}{\theta_{2}^{q-1}}\}$, then $(1/(\alpha \theta_{2}^{q-1}),\alpha) \in \cC \cap \mu_{q+1}^2$, off the line $X=Y$.
		On the other hand, if $\theta_2^{2q-1} \ne \theta_3$, the proof follows from Proposition \ref{pr:t1ne0} and \ref{pr:DabsIrr}.
	\end{proof}
	We now want to further investigate the remaining case $\theta_3=\theta_2^{2q-1}$ and $\theta_1 \ne 0$. The equation for $\Phi(\cD)$ is
	\[\theta_1 U^3 + \theta_2^q U^2 V + \theta_2^{-1 + 2 q} V^3=0.\]
	Let $Z=\frac{V}{U}$ and $z=\theta_2^q Z$. Then every solution of 
	\[\theta_1+z +\frac{1}{\theta_2^{q+1}}z^3=0\]
	gives a linear component of $\Phi(\cD)$.
	
	\begin{lemma}\label{lm:z123}
		Let $\theta_1,\theta_2 \ne 0$ and $z_1,z_2,z_3$ be the solutions of \begin{equation}\label{eq:z}
			\theta_1+z +\frac{1}{\theta_2^{q+1}}z^3=0
		\end{equation} in the algebraic closure $\overline{\Bbb F}_q$ of $\fqs$. Only one of the following conditions holds.  \begin{itemize}
			\item $z_i \in \fq$ for $i=1,2,3$.
			\item There exists $j $ such that $z_j \in \fq$ and $z_i \in \fqs$ for $i \ne j$. 
			\item $z_i \notin \fqs$ for $i=1,2,3$.
		\end{itemize} 
	\end{lemma}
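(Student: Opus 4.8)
The plan is to clear the denominator in \eqref{eq:z} and to recognise that the resulting cubic is actually defined over the subfield $\fq$, after which the three alternatives become simply the three possible factorization types of a separable cubic over a finite field. First I would observe that $\theta_1=1+a_1^{q+1}+a_2^{q+1}+a_3^{q+1}$ is a sum of norms $\N_{\fqs/\fq}(a_i)=a_i^{q+1}\in\fq$, so $\theta_1\in\fq$, and likewise $\theta_2^{q+1}=\N_{\fqs/\fq}(\theta_2)\in\fq$. Since $\theta_2\ne 0$, multiplying \eqref{eq:z} by $\theta_2^{q+1}$ leaves the solution set unchanged and turns it into
\[g(z):=z^3+\theta_2^{q+1}\,z+\theta_1\theta_2^{q+1}=0,\]
a monic cubic all of whose coefficients lie in $\fq$.

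Next I would check that $g$ is separable, so that $z_1,z_2,z_3$ are genuinely three distinct elements of $\overline{\Bbb F}_q$. In characteristic two $g'(z)=z^2+\theta_2^{q+1}$, whose only root is the unique square root $r_0\in\fq$ of $\theta_2^{q+1}$; hence any repeated root of $g$ must equal $r_0$. Writing $g=(z+r_0)^2(z+s)$ and expanding over $\fq$, the vanishing of the $z^2$-coefficient forces $s=0$, and then the constant term yields $\theta_1\theta_2^{q+1}=r_0^2 s=0$, contradicting $\theta_1,\theta_2\ne 0$. Thus $g$ has no repeated root.

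Finally I would run the standard Frobenius-orbit argument. The map $\sigma\colon x\mapsto x^q$ fixes $\fq$ pointwise, so it permutes the three distinct roots of $g$, and a root lies in $\mathbb{F}_{q^k}$ precisely when it is fixed by $\sigma^k$. The cycle type of $\sigma$ on a three-element set is exactly one of: the identity; one fixed point together with a transposition; or a single $3$-cycle. In the first case all $z_i\in\fq$; in the second the fixed root lies in $\fq$ while the two transposed roots have $\sigma$-orbit of length $2$, hence lie in $\fqs\setminus\fq$ (the fixed field of $\sigma^2$ being $\fqs$); in the third every root has orbit length $3$, so lies in $\mathbb{F}_{q^3}\setminus\fq$, and since $\fqs\cap\mathbb{F}_{q^3}=\fq$ none of them lies in $\fqs$. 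These three cases are mutually exclusive and exhaustive, which is precisely the claimed trichotomy.

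The conceptual crux, and the only genuinely nonroutine point, is the first step: noticing that although \eqref{eq:z} is a priori an equation over $\fqs$, both $\theta_1$ and $\theta_2^{q+1}$ are Frobenius-invariant, so the cubic descends to $\fq$. Once this is in place the statement reduces to the well-known description of how a separable cubic splits over $\fq$, the only technical care being the short separability verification of the second step.
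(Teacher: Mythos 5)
Your proof is correct and takes essentially the same route as the paper: the crux in both is observing that $\theta_1$ and $\theta_2^{q+1}$ lie in $\fq$, so that the cubic descends to $\fq$, after which the trichotomy is just the standard classification of how a separable cubic factors over a finite field. The paper simply cites this standard theory (Hirschfeld, p.~20), whereas you prove it from scratch via the separability check and the Frobenius action on the three roots --- a self-contained expansion of the same argument rather than a different one.
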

	\begin{proof}
		Note that the coefficients of Equation \eqref{eq:z} are in $\fq$. The claim is obtained by standard theory, see for example \cite[Pg. 20]{hirschfeld1979projective}.
	\end{proof}
	
	\begin{proposition}
		Let $\theta_3=\theta_2^{2q-1}$. If $ \theta_1+z +\frac{1}{\theta_2^{q+1}}z^3=0 $ has at least one solution in $\fq$ then the curve $\cC$ splits as the union of three absolutely irreducible conics defined over $\fqs$ (over $\fq$) if $\theta_2 \in \fqs \setminus \fq$ ($\theta_2 \in \fq$). In particular, $\cC \cap \mu_{q+1}^2$ is a non-empty set disjoint from the line $X=Y$.
	\end{proposition}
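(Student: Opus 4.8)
The plan is to turn the hypothesis ``\eqref{eq:z} has a root in $\fq$'' into an explicit factorization of $\cC$ and then to single out the conic coming from that $\fq$-rational root, showing it lies entirely inside $\mu_{q+1}^2$.

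First I would record the factorization. Since $\theta_3=\theta_2^{2q-1}$ and $\theta_1\neq 0$, the cubic $\Phi(\cD)$ is the homogeneous form $\theta_1 U^3+\theta_2^q U^2 V+\theta_2^{2q-1}V^3=0$, whose three lines $V=(z_i/\theta_2^q)U$ are indexed by the roots $z_1,z_2,z_3$ of \eqref{eq:z}. Undoing $\Phi$ (so $v=V+\alpha$, where $\alpha^2=\theta_2/\theta_3$, i.e. $\alpha=\theta_2^{1-q}$) and substituting $u=X+Y$, $v=XY$ turns each line into a conic
\[ C_i \colon XY+m_i(X+Y)+\alpha=0, \qquad m_i=\frac{z_i}{\theta_2^q}. \]
Vieta's formulas for \eqref{eq:z} give $\sum_i m_i=0$, $\sum_{i<j}m_im_j=\theta_2^{1-q}$ and $m_1m_2m_3=\theta_1\theta_2^{1-2q}$, and a direct coefficient comparison then verifies $\cC=\theta_3\,C_1C_2C_3$. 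By Lemma \ref{lm:z123}, the presence of one root in $\fq$ forces all $z_i\in\fqs$, hence all $m_i\in\fqs$, so every $C_i$ is defined over $\fqs$; tracking coefficients, $\alpha=1$ exactly when $\theta_2\in\fq$, and then the conic attached to an $\fq$-root descends to $\fq$.

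Next I would check absolute irreducibility. A conic $XY+m(X+Y)+\alpha$ is reducible only as $(X+m)(Y+m)$, i.e. iff $\alpha=m^2$. But $\alpha=m_i^2$ would give $z_i^2=\theta_2^{q+1}$, and substituting $z_i^2=\theta_2^{q+1}$ into \eqref{eq:z} yields $\theta_1+z_i+z_i=\theta_1=0$ in characteristic two, contradicting $\theta_1\neq 0$. Hence each $C_i$ is absolutely irreducible (and, as a byproduct, $m_i\notin\mu_{q+1}$, since $m_i^{q+1}=z_i^2/\theta_2^{q+1}\neq 1$).

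The heart of the argument is to produce the off-diagonal $\mu_{q+1}^2$-points, and here I would use only the conic $C_1$ attached to a root $z_1\in\fq$. Parametrize its affine points by the involution $\sigma(X)=\tfrac{m_1X+\alpha}{X+m_1}$, so $(X,\sigma(X))\in C_1$. For $X\in\mu_{q+1}$,
\[ \sigma(X)^{q+1}=\frac{(m_1X+\alpha)(m_1^qX^{-1}+\alpha^{-1})}{(X+m_1)(X^{-1}+m_1^q)}, \]
and the crucial point is that, because $z_1\in\fq$, one has $m_1^q=z_1/\theta_2$ and therefore $\alpha m_1^q=\theta_2^{1-q}z_1/\theta_2=z_1/\theta_2^q=m_1$, i.e. $m_1+\alpha m_1^q=0$. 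This identity collapses the numerator--denominator comparison to $0=0$, so $\sigma(X)\in\mu_{q+1}$ for \emph{every} $X\in\mu_{q+1}$. Thus $\sigma$ restricts to an involution of $\mu_{q+1}$ (no pole, since $m_1\notin\mu_{q+1}$) whose only fixed point is the solution of $X^2=\alpha$; the remaining $q$ elements pair off into points $(X,\sigma(X))\in C_1\cap\mu_{q+1}^2$ with $X\neq\sigma(X)$. Hence $\cC\cap\mu_{q+1}^2$ contains points off the line $X=Y$, and $f$ is not a PP of $\fqs$.

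I expect the identity $m_1+\alpha m_1^q=0$ to be the main obstacle and the real content of the statement: for a root $z_i\notin\fq$ one has $m_i+\alpha m_i^q\neq 0$, and then the same comparison forces $X^2=\alpha$, so $C_i$ meets $\mu_{q+1}^2$ only on the diagonal. It is precisely the rationality over $\fq$ of one root of \eqref{eq:z} that turns one of the three conics into a curve lying entirely in $\mu_{q+1}$, which is exactly the configuration obstructing the permutation property.
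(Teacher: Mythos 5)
Your proof is correct and takes essentially the same route as the paper: the same factorization of $\cD$ into three lines indexed by the roots $z_i$ (hence of $\cC$ into the conics $z_i(X+Y)+\theta_2^qXY+\theta_2=0$, which are your $C_i$ rescaled by $\theta_2^q$), the same irreducibility argument (reducibility forces $z_i^2=\theta_2^{q+1}$, hence $\theta_1=0$), and the same source of off-diagonal points, since your involution $\sigma$ is exactly the rational function behind the paper's explicit point $\bigl(\tfrac{\theta_2+z_1\alpha}{\theta_2^q\alpha+z_1},\alpha\bigr)$. Your identity $m_1+\alpha m_1^q=0$ (using $z_1\in\fq$) merely spells out the verification, left implicit in the paper, that these points indeed lie in $\mu_{q+1}^2$.
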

	\begin{proof}
		Every solution of Equation \eqref{eq:z} in $\fqs$ gives a linear component of $\Phi(\cD)$ (and $\cD$). From Lemma \ref{lm:z123}, without loss of generality, we can suppose that $z_1 \in \fq$ and $z_2,z_3 \in \fqs$ are the solutions of Equation $\eqref{eq:z}$. Going back to the curve $\cD$ we obtain the following decomposition:
		\begin{equation*}
			\cD\colon ( z_1 u+ \theta_2^q v + \theta_2^q\alpha)( z_2u+ \theta_2^qv + \theta_2^q\alpha)( z_3u+\theta_2^qv +\theta_2^q\alpha)=0
		\end{equation*} 
		This means that the equation of the curve $\cC$ becomes
		\begin{equation*}
			\cC\colon (z_1 (X+Y) +\theta_2^q X Y  + \theta_2)( z_2(X+Y)+ \theta_2^qXY + \theta_2)( z_3(X+Y)+\theta_2^qXY +\theta_2)=0 
		\end{equation*}
		In fact $\alpha^2=\frac{\theta_2}{\theta_2^{2q-1}}$ implies $\alpha\theta_2^q= \frac{\theta_2^q}{\theta_2^{q-1}} =\theta_2$.
		We now claim that the above conics are absolutely irreducible over $ \fqs $.
		A conic is absolutely irreducible if and only if it does not have a singular point. Consider the conic corresponding to $z_1$, the partial derivatives system is
		\begin{equation*}
			\begin{cases}
				\begin{aligned}
					\theta_2^q Y + z_1=0\\
					\theta_2^q X + z_1=0
				\end{aligned}
			\end{cases}
		\end{equation*}
		which means that a singular point has coordinate $X=Y=\frac{z_1}{\theta_2^q}$.
		Such a point belongs to $\cC$ if and only if 
		\[z_1^2+\theta_2^{q+1}=0.\]
		However, if $z_1^2 =\theta_2^{q+1}$, from equation \eqref{eq:z} we obtain
		\[\theta_1+z_1+z_1=\theta_1=0\]
		and this is in contrast with $\theta_1 \ne 0.$
		Similarly, it can be proven that also the other conics are absolutely irreducible.
		Finally, let $\alpha \in \mu_{q+1} \setminus \{\frac{1}{\theta_{2}^{q-1}}\}$, then the point $(\frac{\theta_{2}+z_1 \alpha}{\theta_{2}^q\alpha+z_1},\alpha) \in \cC \cap \mu_{q+1}^2$, off the line $X=Y$.
	\end{proof}
	\begin{corollary}\label{cr:final}
		Let $\theta_2 \ne 0$ and $\theta_4=0$. If either $\theta_1=0 $ or $\theta_3 \neq \theta_2^{2q-1}$ or $ \theta_1+z +\frac{1}{\theta_2^{q+1}}z^3=0 $ has solutions $ z $ defined over $\fq$, then $f$ is not a PP of $\fqs$.
	\end{corollary}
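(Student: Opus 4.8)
The statement is a disjunction of three conditions, and the plan is to settle each one by producing a point of $\cC\cap\mu_{q+1}^2$ lying off the diagonal $X=Y$: since $f$ is a PP of $\fqs$ exactly when $\cC$ carries no such point, each of the three cases will force $f$ not to be a PP, because a point $(a',b')\in\cC\cap\mu_{q+1}^2$ with $a'\ne b'$ means $p(a')=p(b')$. I would first dispose of the case $\theta_1=0$: the earlier corollary of this section (which rests on the explicit factorization supplied by Proposition \ref{pr:t1=0}) already asserts that in this case $\cC\cap\mu_{q+1}^2$ is non-empty and disjoint from $X=Y$, so $f$ is not a PP.

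Next I would treat $\theta_3\ne\theta_2^{2q-1}$. If $\theta_1=0$ we are already done by the previous step, so I may assume $\theta_1\ne0$; then the same earlier corollary, which combines Propositions \ref{pr:t1ne0} and \ref{pr:DabsIrr}, tells us that $\cC$ is absolutely irreducible. This is exactly the configuration excluded by Proposition \ref{pr:A-P} for a permutation polynomial once $q\ge512$, and the hypothesis $m\ge9$ secures $q\ge512$; concretely, absolute irreducibility of $\cC$ passes to its $\fq$-birational model $\cH$, and Hasse--Weil produces an $\fq$-rational point of $\cH$ off the diagonal, hence a point of $\cC\cap\mu_{q+1}^2$ with distinct coordinates, so once more $f$ is not a PP.

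Finally I would handle the disjunct in which \eqref{eq:z} has a root in $\fq$. If $\theta_1=0$ or $\theta_3\ne\theta_2^{2q-1}$ the two cases already settled apply, so I may assume $\theta_1\ne0$ and $\theta_3=\theta_2^{2q-1}$; but this is precisely the hypothesis of the proposition immediately preceding this corollary, which shows that $\cC$ splits into three absolutely irreducible conics and, crucially, that $\cC\cap\mu_{q+1}^2$ is again non-empty and disjoint from $X=Y$. With all three disjuncts covered, the corollary follows. Being essentially an assembly of the earlier results, the argument presents no genuinely new obstacle; the sole quantitative input is the Hasse--Weil estimate of Proposition \ref{pr:A-P}, so the only point demanding care is the logical bookkeeping that guarantees, in each disjunct, that the side conditions $\theta_1\ne0$ and $\theta_3=\theta_2^{2q-1}$ needed to invoke the splitting and irreducibility statements are either in force or absorbed into a case that has already been resolved.
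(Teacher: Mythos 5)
Your proof is correct and follows essentially the same route as the paper, which states this corollary without a separate proof precisely because it is, as you show, an assembly of the section's preceding results: the unnumbered corollary (via Propositions \ref{pr:t1=0}, \ref{pr:t1ne0} and \ref{pr:DabsIrr}) for the disjuncts $\theta_1=0$ and $\theta_3\ne\theta_2^{2q-1}$, the three-conics proposition for the last disjunct, and Proposition \ref{pr:A-P} (with $m\ge 9$ giving $q\ge 512$) to convert absolute irreducibility into the existence of a point of $\cC\cap\mu_{q+1}^2$ off the diagonal. Your case bookkeeping, reducing each disjunct to the side conditions ($\theta_1\ne 0$, $\theta_3=\theta_2^{2q-1}$) required by the cited results, is exactly the intended argument.
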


	\section{$\theta_2\ne0$ and $\theta_4\ne 0$} \label{sec:6}
	We just need to prove that in this case the polynomial $f(\mathrm{X})$ is never a PP. We will do that again by using the connection between permutation polynomials and algebraic curves. This part is inspired by the work done in \cite{bartoli2018conjecture}.
	In Section 3 we showed that the polynomial $f(x)$ in Theorem \ref{th:main2} is a PP if and only if $\cH$ has no $\fq$-rational points off the line $X=Y$.
	In our case the curve $\cH$ has degree at most $6$. By Proposition \ref{pr:A-P}, for $ q $ large enough such a curve has no $\fq$-rational points off the line $X=Y$ if only if it splits into absolutely irreducible components not defined over $\fq$ which have no $\fq$-rational points off the line $X=Y$. We will show that for $\theta_{2} \ne 0$ and $\theta_4 \ne 0$ this is never the case.

	For this last section, our method requires a computer to assist us in computing resultants between polynomials and in factorizing polynomials of low degrees over small fields. The elementary MAGMA \cite{Magma} programs used for our purposes are presented in the Appendix.
	However, we point out that our results are valid for general $q$'s of type $2^m$, and do not rely on computer searches.
	
	Let $k \in \fq$ be an element of absolute trace (over $\mathbb{ F}_2$) equal to $1$. Then we can choose $i \in \fqs$ such that $i^2=i+k$ and in particular $i^q=i+1$.
	
	Let $\theta_{2}=C+iD$, $\theta_3=E+iF$, for $C,D,E,F \in \fq$.
	By direct computations the curve $\cH$ has equation $ L(X,Y)=0 $, for:
	\begin{equation}\label{EquazioneCurva}
		\begin{aligned}
			L(X,Y)=&\gamma_{3,3}X^3Y^3+\gamma_{3,2}X^3Y^2+\gamma_{2,3}X^2Y^3+\gamma_{3,1}X^3Y+\gamma_{1,3}XY^3+\gamma_{3,0}X^3+\gamma_{0,3}Y^3+\\
			+\gamma_{2,2}X^2Y^2&+\gamma_{2,1}X^2Y+\gamma_{1,2}XY^2+\gamma_{2,0}X^2+\gamma_{1,1}XY+\gamma_{0,2}Y^2+\gamma_{1,0} X+\gamma_{0,1}Y+\gamma_{0,0},
		\end{aligned}
	\end{equation}
	with 
	$$\begin{array}{lll}
		\gamma_{3,3}&=&D+F\\
		\gamma_{3,2}&=&C+D+E+F+\theta_4\\
		\gamma_{3,1}&=&C + Dk + D + E + Fk + F + \theta_4\\
		\gamma_{3,0}&=&Ck + C + Ek + E + F + k\theta_4 + \theta_4 + \theta_1\\
		\gamma_{2,3}&=&C + D + E + F + \theta_4\\
		\gamma_{1,3}&=&C + Dk + D + E + Fk + F + \theta_4\\
		\gamma_{0,3}&=&Ck + C + Ek + E + F + k\theta_4 + \theta_4 + \theta_1\\
		\gamma_{2,2}&=&C + Dk + D + E + Fk + F,\\
		\gamma_{2,1}&=&Ck + C + Ek + E + F + k\theta_4 + \theta_1,\\
		\gamma_{1,2}&=&Ck + C + Ek + E + F + k\theta_4 + \theta_1,\\
		\gamma_{2,0}&=&C + Dk^2 + Dk + E + Fk^2 + Fk + F + k\theta_4 \\
		\gamma_{0,2}&=& C + Dk^2 + Dk + E + Fk^2 + Fk + F + k\theta_4\\
		\gamma_{1,1}&=&C + Dk^2 + Dk + E + Fk^2 + Fk + F,\\
		\gamma_{1,0}&=& Ck^2 + Ck + Dk^2 + Ek^2 + Ek + E + Fk^2 + F + k^2\theta_4,\\
		\gamma_{0,1}&=&C k^2+C k+D k^2+F k^2+F+k^2 \theta_4+E k^2+E k+E,\\
		\gamma_{0,0}&=&C k^2+D k^3+F k^3+F k+F+E k^2+E.\\
	\end{array}
	$$
	
	In the following we will show that if $\theta_4 \ne 0$ and $\theta_{2}\ne 0$ then $\cH$ never splits into components none of them is defined over $\fq$.
	
	\subsection{Case $\gamma_{3,3} \ne 0$}$\\$
	In this case $\cH$ has degree $6$. We observe that the morphism $(x,y) \mapsto (y,x)$ fixes $\cH$ and therefore it acts on its components. Also, since $\cH$ is defined over $\fq$, then the Frobenius $\phi_q(x)=x^q$ acts on its components either.
	This implies that if there is a line as a component, then there must be $6$ lines. If not, $\cH$ splits as either $3$  absolutely irreducible conics or $2$ absolutely irreducible cubics.
	\begin{itemize}
		\item[1.] $\cH$ splits into 6 lines. In this case the factorization of $L(X,Y)$ in Equation \eqref{EquazioneCurva} must be
		\begin{equation}\label{eq:6l}
			(D+F)(X+a)(X+b)(X+c)(Y+a)(Y+b)(Y+c)
		\end{equation} for some $a,b,c$ in $\overline{\mathbb{F}}_q$, since the homogeneous part of degree $6$ is $(D+F)x^3y^3$.
		Now we get \begin{equation*}
			\begin{aligned}
&C + Dk + Dab + Dac + Dbc + D + E + Fk + Fab + Fac + Fbc + F + \theta_4=0\\
&C + Da + Db + Dc + D + E + Fa + F\
b + Fc + F + \theta_4=0
			\end{aligned}
		\end{equation*}
		which implies $k + ab + ac + a + bc + b + c=0$ since $D+F\ne0$.
		Last condition implies:
		\begin{equation*}
			\begin{aligned}
				&	Ca^2 + Cb^2 + C + D k^2 + D k a^2 + D k b^2 + D k + D a^4 + D a^2 b^2 + D a^2 + D b^4 + D b^2 + D +  \\
				&	E a^2+E b^2 + E + F k^2 + F k a^2 + F k b^2 + F k + F a^4 + F a^2 b^2 + F a^2 + F b^4 + F b^2 + F=0\\
				&C a^2 + C b^2 + C + D k^2 + D k a^2 + D k b^2 + D k + D a^4 + D a^2 b^2 + D b^4 + E a^2 + E b^2 + E +\\
				& F k^2 +F k a^2 + F k b^2 + F k + F a^4 + F a^2 b^2 + F a^2 + F b^4 + F b^2 + F=0
			\end{aligned}
		\end{equation*} 
		which implies in particular: $Da^2 + Db^2 + D=0$. Now let $D \ne 0$. Then $a^2 + b^2 + 1=0$, which implies $k = b^2 + b + 1$, $a=b^2+1$ and $c=b+1$.
		Thus, by direct computation, we get 
			\begin{equation*}
			\begin{aligned}
				&C + Db^4 + Db^2 + Db + D + E + Fb^4 + Fb^2 + Fb=0,\\
				&C + Db^4 + Db^2 + Db + E + Fb^4 + Fb^2 + Fb=0,
			\end{aligned}
		\end{equation*}
		which implies $D=0$ a contradiction.

		On the other hand, if $D=0$, we obtain
		\begin{equation*}
			\begin{aligned}
&C + E + Fa + Fb + Fc + F + \theta_4=0\\
&C + E + Fk + Fab + Fac + Fbc + F + \theta_4=0
			\end{aligned}
		\end{equation*}
		which implies:
		\[k + ab + ac + a + bc + b + c=0.\]
		Substituting $k$ we get
		\[C + E + Fa + Fb + Fc + F + \theta_4=0,\]
		and by eliminating $E$ we obtain
		\begin{equation*}
			\begin{aligned}
				&Fa^2 + Fab + Fac + Fb^2 + Fbc + Fc^2 + \theta_4 + \theta_1=0,\\
				&Fa^2 + Fab + Fac + Fb^2 + Fbc + Fc^2 + \theta_4=0
			\end{aligned}
		\end{equation*}
		which implies $\theta_{1}=0$. By definition we know that $\theta_{2}^{q+1}+\theta_3^{q+1}=\theta_4(\theta_4+\theta_1)$. Since $D=0$ and $\theta_{1}=0$, 
		the latter becomes $ C^2 + E^2 + EF + F^2k + \theta_4^2=0 $. By direct computations we get
		\[E + Fk + Fa^2 + Fb^2 + Fc^2 + F=0,\]
		which implies $C=0$ or $F=0$. In both cases we have a contradiction.
		\item[2.] $\cH$ splits into $3$ absolutely irreducible conics. This case is only possible when the three conics belong to the same orbit under the Frobenius $\phi_q$. More precisely, the equations of the curve must be of the form
		\begin{equation*}
			\begin{aligned}
				&(D+F)(XY +a(X+Y)+b)(XY+a^q(X+Y)+b^q)(XY+a^{q^2}(X+Y)+b^{q^2})=0
			\end{aligned}
		\end{equation*}
	for $a,b \in \mathbb{ F}_{q^3}$. First we suppose that both $a $, $b \in \mathbb{ F}_{q^3} \setminus\fq$. In this case, $ \{1,a,a^2\} $ and $\{1,b,b^2\}$ are linearly independent over $ \fq $. Also, we know that $a^3=c_1 a +c_2$ and $b^3=d_1 b +d_2$, for some $c_1,c_2,d_1,d_2 \in \fq$. First, we derive $y$ from the equation of the first conic and then we plug it in the equation of our curve. After that, we isolate the coefficients of $a$ and $a^2$. By direct computations we obtain:
	\[C + D + E + F + \theta_4=0, \mbox{ or } \theta_4=0\]
	which implies $C + D + E + F + \theta_4=0$. It follows $k=c_1$ and 
	\begin{equation}\label{eq:a}
		DE + DF + Dc_2\theta_4 + D\theta_4 + EF + F^2 + Fc_2\theta_4 + F\theta_4 + \theta_4^2=0
	\end{equation}
	Repeating for $b$ we obtain
	\begin{equation}
		\begin{aligned}\label{eqs:b}
		 &Dk + Fk + \theta_4=0\\
		&D^2 + DF + D^2d_1 + F^2d_1 + \theta_4^2=0\\
		& D^2d_2 + DE + DF + EF + F^2d_2 + F^2 + F\theta_4 + \theta_4^2=0
		\end{aligned}
	\end{equation}
	The first two equations in \eqref{eqs:b} imply that $Da^2b + D + Fa^2b + a\theta_4 + b\theta_4=0$. From the latter equation together with the third equation in \eqref{eqs:b} and Equation \eqref{eq:a} we derive that either $Da + E + F + \theta_4=0$ or $(DE + DF + D\theta_4 +EF + F^2 + F\theta_4)a+D\theta_4=0$, which leads to $D=0$ and one of the following: either $a=0$ or $E+F+\theta_4=0$ or $F=0$. However, since $D=0$ and $D+C+E+F+\theta_4=0$, the latter two conditions are both non-admissible (we would have either $D+F=0$ or $ \theta_2=0 $). Hence $a=0$, a contradiction.
	When either $a \in \fq$ or $b \in \fq$ we derive easily a contradiction by direct checking (see the Appendix for all the computations).

		\item [3.] $\cH$ splits into $2$ absolutely irreducible cubics defined over $\fqs$. The leading homogeneous part of $L(X,Y)$ is 
		$(D+F)X^3Y^3$, so the homogeneous part of the cubics is either $X^3, Y^3$ or $X^2 Y, X Y^2$. Since the Frobenius $\phi_q$ switches the two cubics, this implies that they must be defined over $\fq$.

	\end{itemize}
	
	\subsection{Case $\gamma_{3,3} = 0$.}$\\$
	If $\theta_4 \ne C+E$, $\cH$ has degree $5$. In this case we note that the line $X+Y=0$ cannot be a component of $\cH$. In fact, by direct computations, $X=Y$ implies $C+E=F=0$ and $E=0$ which in particular means $\theta_{2}=0$.
	Now, since the leading homogeneous part is $(C+E+\theta_4)(X^3Y^2+X^2Y^3)$, the point $P=(1:1:0)$ is a simple $\fq$-rational point. Then there  must be an absolutely irreducible component through $P$ distinct from the line $X+Y=0$.
	
	Let $\theta_4=C+E$. Note that $\theta_{2}^{q+1}+\theta_3^{q+1}=\theta_4 (\theta_{1}+\theta_4)$, we obtain
	\[CD + C\theta_1 + D^2k + EF + E\theta_1 + F^2k=0,\]
	and, since $D+F=0$, 
	\[F + \theta_1=0 \mbox{ or } C+E=0\]
	which implies $F=\theta_1$. 
	In this case the homogeneous part of $L(x,y)$ is $(C+E)x^2y^2$. 
	
	Since now the degree of $\cH$ is $4$ we need to deal only with two cases: $4$ lines or $2$ absolutely irreducible conics.
	\begin{itemize}
		\item[1.] $\cH$ splits as the union of $4$ lines. The factorization of $L(x,y)$ must be 
		\begin{equation}\label{eq:4l}
			(C+E)(X+a)(X+b)(Y+a)(Y+b)=0
		\end{equation}
		for some $a,b \in \overline{\mathbb{ F}}_q$. Then 
		\[a+b+1=0\]
		which implies $F=0$. It follows that
		\[k + b^2 + b + 1=0\]
		which leads to $C=0$. But this is a contradiction since $C=D=0$ implies $\theta_2=0$.
		
		\item[2.] $ \cH $ splits as the union of two absolutely irreducible conics. Since those conics are switched by $\phi_q$, we have only two possibilities, according whether they  are switched by $(x,y) \mapsto(y,x)$ or not, that is either
		$$XY+(a+i b)X+(a+(i+1)b)Y+c=0,  \textrm{ and }    XY+(a+(i+1)b) X+(a+i b) Y+c=0,$$
		for some $a,b,c \in \mathbb{F}_q$,	or 
			\[X (a+b i)+Y (a+b i)+c+d i+XY=0, \mbox{ and }\]
		\[X (a+b (i+1))+Y (a+b (i+1))+c+d (i+1)+XY=0,\]
			for some $a,b,c,d \in \mathbb{F}_q$. 
			
		In the first case we get $b+1=0$ which leads to $F=0$ and then $a^2+a+1=0$. It follows 
		\[Ck+Cc+Ek+Ec+E=0\]
		which implies either $C=0$ or $E=0$. Since $C=0$ is again a contradiction this means that $E=0$ and $c=k$. 
		However when $F=\theta_{1}=D=E=0$, the equation of our curve $\cC$ (see Equations \eqref{eq:CurveC} and \eqref{eq:C}) becomes 
		\[C(X+1)(Y+1)(X^2+XY+Y^2)=0,\]
		for $\theta_{2}=\theta_4=C$,
		which has points $(X,Y) \in \mu_{q+1}^2$ off the line $X=Y$.
		
	In the second case, by direct computation, we obtain $b=1$ and after substituting,
		\[Cd + C + Ed + E + F=0,\]
		which implies 
		\[Ck + Ca + Cc + Ek + Ea + Ec + E + Fa + F=0.\] 
		Again, by direct computation, one finds
		\[Ca^2 + Ca + C + Ea^2 + Ea + E + F=0.\]
		Now we distinguish two cases. If $F\ne0$, then
		\[k=(C^3 - C^2F - CF^2 - F^3 -  C^2E - CFE - F^2E)/(F^2(C + E)),\]
		and by replacing $k$ in the equation of our curve $\cH$, we obtain the following factorization of $L(X,Y)$:
		\[(FX+C)(FY+C)L'(X,Y), \]
		(for the equation of $L'(X,Y)$ see the Appendix) which leads to a contradiction, for the conics being irreducible.
		If $F=0$, by direct computations, we obtain $b=1$, $d=1$, $a^2+a+1=0$ and 
		\[Ck^2 + Ck + Cc^2 + Cc + Ek^2 + Ek + Ec^2 + Ec + E=0,\]
		which implies $C=0$, a contradiction again since $\theta_{2}\ne 0$.
	\end{itemize}

	\section{Proof of main Theorem \ref{th:main2}}
	If $\theta_2=0$, the proof is obtained by Corollary \ref{cr:Cabsirr} and Corollary \ref{cr:a3noncube}.
	When $\theta_2 \ne0$ and $\theta_4=0$, the proof follows from Corollary \ref{cr:final}, since the equation
	\[\theta_1+z +\frac{1}{\theta_2^{q+1}}z^3=0\]
	is equivalent to the equation \eqref{eq:trin} after substituting $z=\theta_2^{q+1}x$.	
	When $\theta_{2} \ne 0$ and $\theta_4 \ne 0$, the proof is obtained in Section \ref{sec:6}.

	\section{Appendix}
	In \cite{bartoli2018conjecture}, the author provided a very useful mini-program to compute resultant of polynomials over finite fields. In what follows we will use the same Magma procedure to investigate the solutions of a system of polynomial equations.
	For the sake of completeness, we now recall the main functions we use in this paper ``FindCoefficients2" and ``Substitution".  See \cite[Appendix]{bartoli2018conjecture} for more details.
	
	{\footnotesize
		\begin{verbatim}
		K<x,y,C,D,E,F,i,j,m,k,a,b,c,d,e,f,g,t4,t1,aq,bq,aq2,bq2> := PolynomialRing(GF(2),23);
			
			FindCoefficients2 := function(pol,var1,var2)
			T := Terms(pol);
			Coeff := {};
			MAX1 := Degree(pol,var1);
			MAX2 := Degree(pol,var2);
			for i in [0..MAX1] do
			for j in [0..MAX2] do
			c := K!0;
			for t in T do
			if IsDivisibleBy(t,var1^i*var2^j) eq true and 
			IsDivisibleBy(t,var1^i*var2^(j+1)) eq false  and 
			IsDivisibleBy(t,var1^(i+1)*var2^j) eq false  then
			c := c+ K! (t/(var1^i*var2^j));
			end if;
			end for;
			if c ne 0 then
			Coeff := Coeff join {c};
			i,j,c;
			end if;
			end for;
			end for;
			return Coeff;
			end function;
			
			Substitution := function (pol, m, p)
			e := 0;
			New := K! pol;
			while e eq 0 do
			N := K!0;
			T := Terms(New);
			i:= 0;
			for t in T do
			if IsDivisibleBy(t,m) eq true then
			Q := K! (t/m);
			i := 1;
			N := K!(N + Q* p);
			else 
			N := K!(N + t);
			end if;
			end for;
			if i eq 0 then 
			return New;
			else	
			New := K!N;
			end if;	
			end while;
			end function;
		\end{verbatim}
	}
	
	{\footnotesize
	\begin{verbatim}
		t2:=C+i*D;
		t2q:=C+(i+1)*D;
		t3:=E+i*F;
		t3q:=E+(i+1)*F;
		eq1:=Substitution(t2*t2q+t3*t3q+t4*(t4+t1),i^2,i+k);
		X:=(x+i)/(x+i+1);
		Y:=(y+i)/(y+i+1);
		Gxy:=t3q + t3*X^3*Y^3 + 
		t4*X*Y*(X + Y) + 
		(t1 + t4)*(X + Y)^3 + 
		t2*(X*Y + (X + Y)^2) + 
		t2q*X*Y*(X*Y + (X + Y)^2);
		Curve:=(x+i+1)^3*(y+i+1)^3*Gxy;
		Curve:=Substitution(Curve,i^2,i+k);
	\end{verbatim}
\subsection{$\gamma_{3,3} \ne 0$.}$ \\$
$ \cH $ splits as $6$ lines.

Case $D\ne0$.
\begin{verbatim}
		PROD := (x+a)*(x+b)*(x+c)*(y+a)*(y+b)*(y+c); 
		CC := FindCoefficients2(Curve+ (D+F)*PROD,x,y);
		{Factorization(pol) : pol in CC | pol ne 0}; 
		///C + D*a + D*b + D*c + D + E + F*a + F*b + F*c + F + t4+C + D*k + D*a*b + D*a*c + D*b*c + D + E + F*k + F*a*b + F*a*c + F*b*c + F + t4=0
		/// C + D*k + D*a*b + D*a*c + D*b*c + D + E + F*k + F*a*b + F*a*c + F*b*c + F + t4=0
		p2 := k+ a*b + a*c + a + b*c + b + c; 
		CC2 := {Resultant(pol,p2,c) : pol in CC};
		{Factorization(pol) : pol in CC2 | pol ne 0};
		///D*a^2 + D*b^2 + D=0 
		p3:=a^2 + b^2 + 1;
		CC3 := {Resultant(pol,p3,a) : pol in CC2};
		{Factorization(pol) : pol in CC3 | pol ne 0};
		///a=b^2+1, k=b^2+b+1, c=b+1
		PROD := (x+b^2+1)*(x+b)*(x+b+1)*(y+b^2+1)*(y+b)*(y+b+1);
		CC := FindCoefficients2(Curve+ (D+F)*PROD,x,y);
		CC := {Resultant(pol,k+b^2+b+1,k) : pol in CC};
		{Factorization(pol) : pol in CC | pol ne 0};
		///D=0 ###
	\end{verbatim}
Case $D=0$.
\begin{verbatim}
	PROD := (x+a)*(x+b)*(x+c)*(y+a)*(y+b)*(y+c); 
	CC := FindCoefficients2(Curve+ (D+F)*PROD,x,y);
	CC:={Resultant(pol,D,D) : pol in CC};
	{Factorization(pol) : pol in CC | pol ne 0};
	p2 := k + a*b + a*c + a + b*c + b + c; 
	CC2 := {Resultant(pol,p2,k) : pol in CC};
	{Factorization(pol) : pol in CC2 | pol ne 0};
	p3:=C + E + F*a + F*b + F*c + F + t4;
	CC3 := {Resultant(pol,p3,E) : pol in CC2}; 
	{Factorization(pol) : pol in CC3 | pol ne 0};
	///t1=0 
	eq1_2:=Substitution(eq1,t1,0);
	eq1_2:=Substitution(eq1_2,D,0);
	CC := FindCoefficients2(Curve+ (D+F)*PROD,x,y);
	CC:={Resultant(pol,D,D) : pol in CC};
	CC:={Resultant(pol,t1,t1) : pol in CC};
	p1:=eq1;
	CC1:={Resultant(pol,p1,C) : pol in CC};
	{Factorization(pol) : pol in CC1 | pol ne 0};
	p2 :=E + F*k + F*a^2 + F*b^2 + F*c^2 + F; 
	CC2 := {Resultant(pol,p2,a) : pol in CC};
	{Factorization(pol) : pol in CC2 | pol ne 0};
	/// C=0 or F=0 ###
\end{verbatim}
$ \cH $ splits as $3$ absolutely irreducible conics.
Case $\alpha,\beta \in \mathbb{ F}_{q^3} \setminus \fq$.
\begin{verbatim}
Curve1:=K!((x+a)^3*Evaluate(Curve, [x,(b+a*x)/(x+(a)),C,D,E,F,i,j,m,k,
a,b,c,d,e,f,g,t4,t1,aq,bq,aq2,bq2]));
CC := FindCoefficients2(Curve1,x,y);
CC:={Resultant(pol,eq1,t1) : pol in CC};
CC:={Substitution(pol,a^3,m*a+g) : pol in CC};
CC:={Substitution(pol,b^3,i*b+j) : pol in CC};
{Factorization(pol) : pol in CC | pol ne 0};
pa1:=C^2 + C*D + C*k*t4 + C*a^2*t4 + C*a*t4 + C*t4 + D^2*k + D*m*a*t4 + D*k*a*t4 +
 D*a^2*t4 + D*a*t4 + D*g*t4 + E^2 + E*F + E*k*t4 + E*a^2*t4 + E*a*t4 + E*t4 +
 F^2*k + F*m*a*t4 + F*k*a*t4 + F*a^2*t4 +  F*a*t4 + F*g*t4 + F*t4 + k*t4^2 + a^2*t4^2 + a*t4^2;
Coefficients(pa1,a);
p1:=C + D + E + F + t4;
CC1:={Resultant(pol,p1,C) : pol in CC};
{Factorization(pol) : pol in CC1 | pol ne 0}; 
 pa2:=D*k + E + F*k + F + m*a*t4 + k*a*t4 + k*t4 + g*t4;
Coefficients(pa2,a);
p2:=k +m;
CC2 := {Resultant(pol,p2,m) : pol in CC1}; 
{Factorization(pol) : pol in CC2 | pol ne 0};
pb1:=D*i*b + D*j + D*k^3 + D*k^2*b + D*k^2 + D*k*b^2 + D*k*b + D*b + E + F*i*b + F*j +
 F*k^3 + F*k^2*b + F*k^2 + F*k*b^2 + F*k*b + F*k + F + k^2*t4 + b^2*t4 + b*t4;
Coefficients(pb1,b);
p3:= D*k + F*k + t4;
CC3:={Resultant(pol,p3,k) : pol in CC2}; 
{Factorization(pol) : pol in CC3 | pol ne 0};
pb2:=D^2*i*b + D^2*j + D^2*b + D*E + D*F*b + D*F + E*F + F^2*i*b +
 F^2*j + F^2 + F*t4 + b*t4^2 + t4^2;
Coefficients(pb2,b);
p4:= D^2 + D*F + D^2*i + F^2*i + t4^2;
CC4:={Resultant(pol,p4,i) : pol in CC3}; 
{Factorization(pol) : pol in CC4 | pol ne 0};
p5:=D*a^2*b + D + F*a^2*b + a*t4 + b*t4;
CC5:={Resultant(pol,p5,b) : pol in CC4}; 
{Factorization(pol) : pol in CC5 | pol ne 0};
CC6:={Substitution(pol,a^3,k*a+g) : pol in CC5};
CC6:={Resultant(pol,D*k+ F*k + t4,k) : pol in CC6}; 
{Factorization(pol) : pol in CC6 | pol ne 0};
p7:=D*E + D*F + D*g*t4 + D*t4 + E*F + F^2 + F*g*t4 + F*t4 + t4^2;
CC7:={Resultant(pol,p7,g) : pol in CC6}; 
{Factorization(pol) : pol in CC7 | pol ne 0};
p8:=D^2*j + D*E + D*F + E*F + F^2*j + F^2 + F*t4 + t4^2;
CC8:={Resultant(pol,p8,j) : pol in CC7}; 
{Factorization(pol) : pol in CC8 | pol ne 0};
p9:=D;
CC9:={Resultant(pol,p9,D) : pol in CC8}; 
{Factorization(pol) : pol in CC9 | pol ne 0};
///a=0 ###
\end{verbatim}
Case $\alpha \in \fq$ or $\beta \in \fq$.
\begin{verbatim}
	PROD1:= (x*y+a*(x+y)+b)*(x*y+a*(x+y)+bq)*(x*y+a*(x+y)+bq2);
	PROD2:= (x*y+a*(x+y)+b)*(x*y+aq*(x+y)+b)*(x*y+aq2*(x+y)+b);
	CC := FindCoefficients2(Curve+(D+F)*PROD1,x,y);
	CC:={Resultant(pol,eq1,t1) : pol in CC};
	{Factorization(pol) : pol in CC | pol ne 0};
	/// t4=0 (by sum of the two equations starting by C^2+CD+..) ###
	CC := FindCoefficients2(Curve+(D+F)*PROD2,x,y);
	CC:={Resultant(pol,eq1,t1) : pol in CC};
	{Factorization(pol) : pol in CC | pol ne 0};
	/// t4=0 (by sum of the two equations starting by C^2+CD+..) ###
\end{verbatim}

\subsection{$\gamma_{3,3}=0$.}$ \\$
In this case we recall that $D+F=0$ and $\theta_4=C+E$, implying $\theta_{1}=F$.
\begin{verbatim}
	Curve2:=Substitution(Curve,D,F);
	Curve2:=Substitution(Curve2,t4,C+E);
	Curve2:=Substitution(Curve2,t1,F);
\end{verbatim}
$ \cH $ splits as $4$ lines.
\begin{verbatim}
	PROD := (x+a)*(x+b)*(y+a)*(y+b); 
	CC := FindCoefficients2(Curve2+ (C+E)*PROD,x,y);
	{Factorization(pol) : pol in CC | pol ne 0};
	p2 := a + b + 1; 
	CC2 := {Resultant(pol,p2,a) : pol in CC};
	{Factorization(pol) : pol in CC2 | pol ne 0};
	p3:=F;
	CC3 := {Resultant(pol,p3,F) : pol in CC2};
	{Factorization(pol) : pol in CC3 | pol ne 0};
	p4:=k + b^2 + b + 1;
	CC4 := {Resultant(pol,p4,k) : pol in CC3}; 
	{Factorization(pol) : pol in CC4 | pol ne 0};
	/// C=0 ###
\end{verbatim}
$\cH$ splits as the union of two absolutely irreducible conics.

Case $1$: $(x,y)\mapsto(y,x)$ switches the two conics.
\begin{verbatim}
	PROD := (x*y+(a+i*b)*x+(a+(i+1)*b)*y+c)*(x*y+(a+(i+1)*b)*x+(a+i*b)*y+c); 
	PROD := Substitution(PROD,i^2,i+k);
	CC := FindCoefficients2(Curve2+(C+E)*PROD,x,y);
	{Factorization(pol) : pol in CC | pol ne 0};
	p2 :=  b + 1; 
	CC2 := {Resultant(pol,p2,b) : pol in CC};
	{Factorization(pol) : pol in CC2 | pol ne 0};
	p3:=F;
	CC3 := {Resultant(pol,p3,F) : pol in CC2};
	{Factorization(pol) : pol in CC3 | pol ne 0};
	p4:=a^2+a+1;
	CC4 := {Resultant(pol,p4,a) : pol in CC3}; 
	{Factorization(pol) : pol in CC4 | pol ne 0};
	p5:=C*k + C*c + E*k + E*c + E;
	CC5:={Resultant(pol,p5,k) : pol in CC4}; 
	{Factorization(pol) : pol in CC5 | pol ne 0};
	/// E=0 ###
\end{verbatim}
Case $2$:  $(x,y)\mapsto(y,x)$ fixes the two conics.

Case $F\ne 0$.
\begin{verbatim}
	PROD := (x*y+(a+i*b)*x+(a+i*b)*y+(c+i*d))*(x*y+(a+(i+1)*b)*x+(a+(i+1)*b)*y+(c+(i+1)*d)); 
	PROD := Substitution(PROD,i^2,i+k);
	CC := FindCoefficients2(Curve2+(C+E)*PROD,x,y); 
	{Factorization(pol) : pol in CC | pol ne 0};
	p2 :=  b + 1; 
	CC2 := {Resultant(pol,p2,b) : pol in CC};
	{Factorization(pol) : pol in CC2 | pol ne 0};
	p3:=C*d + C + E*d + E + F;
	CC3 := {Resultant(pol,p3,d) : pol in CC2};
	{Factorization(pol) : pol in CC3 | pol ne 0};
	p4:=C*k + C*a + C*c + E*k + E*a + E*c + E + F*a + F;
	CC4 := {Resultant(pol,p4,c) : pol in CC3}; 
	{Factorization(pol) : pol in CC4 | pol ne 0};
	p5:=C*a^2 + C*a + C + E*a^2 + E*a + E + F;
	CC5:={Resultant(pol,p5,a) : pol in CC4}; 
	{Factorization(pol) : pol in CC5 | pol ne 0};
	/// k:=-(C^3 - C^2*F - C*F^2 - F^3 -  C^2*E - C*F*E - F^2*E)/(F^2*(C + E));
	Factorization(K!(F^4*Evaluate(Curve2,
	[x,y,C,D,E,F,i,j,m,(C^3 + C^2*F + C*F^2 + F^3 +  C^2*E + C*F*E + F^2*E)/(F^2*(C + E)),
	a,b,c,d,e,f,g,t4,t1,aq,bq,aq2,bq2])));
\end{verbatim}
Case $F=0$
\begin{verbatim}
	Curve3:=Substitution(Curve2,F,0);
	PROD := (x*y+(a+i*b)*x+(a+i*b)*y+(c+i*d))*(x*y+(a+(i+1)*b)*x+(a+(i+1)*b)*y+(c+(i+1)*d)); 
	PROD := Substitution(PROD,i^2,i+k);
	CC := FindCoefficients2(Curve3+(C+E)*PROD,x,y); 
	{Factorization(pol) : pol in CC | pol ne 0};
	p2 :=  b + 1; 
	CC2 := {Resultant(pol,p2,b) : pol in CC};
	{Factorization(pol) : pol in CC2 | pol ne 0};
	p3:=d+1;
	CC3 := {Resultant(pol,p3,d) : pol in CC2};
	{Factorization(pol) : pol in CC3 | pol ne 0};
	p4:=a^2+a+1;
	CC4 := {Resultant(pol,p4,a) : pol in CC3}; 
	{Factorization(pol) : pol in CC4 | pol ne 0};
	p5:=C*k^2 + C*k + C*c^2 + C*c + E*k^2 + E*k + E*c^2 + E*c + E;
	CC5:={Resultant(pol,p5,c) : pol in CC4}; 
	{Factorization(pol) : pol in CC5 | pol ne 0};
	/// C=0 ###
\end{verbatim}
	
	\section*{Acknowledgment}
	This work was supported by the National Science Foundation under Grant N° 2127742 
	
	
	

	
	\bibliographystyle{acm}
	\bibliography{bib_tesi}
	
\end{document}